\newtheorem{lemma}{Lemma}[section]
\newtheorem{corollary}[lemma]{Corollary}
\newtheorem{theorem}[lemma]{Theorem}
\newtheorem{proposition}[lemma]{Proposition}
\theoremstyle{definition}
\newtheorem{remark}[lemma]{Remark}
\newtheorem{definition}[lemma]{Definition}
\newtheorem{examples}[lemma]{Examples}
\begin{document}
\title{Decomposable Leavitt path algebras for arbitrary graphs}

\author{G. Aranda Pino}
\address{Departamento de \'{A}lgebra, Geometr\'{\i}a y Topolog\'{\i}a\\
Universidad de M\'{a}laga\\
29071, M\'{a}laga, Spain}
\email{g.aranda@uma.es}

\author{A. R. Nasr-Isfahani}
\address{Department of Mathematics\\
University of Isfahan\\
P.O. Box: 81746-73441, Isfahan, Iran\\ and School of Mathematics, Institute for Research in Fundamental Sciences (IPM), P.O. Box: 19395-5746, Tehran, Iran}
\email{a$_{-}$nasr$_{-}$isfahani@yahoo.com }

\subjclass[2000]{16W99}

\keywords{Leavitt path algebra, indecomposable algebra, decomposable algebra, arbitrary graph}

\begin{abstract} For any field $K$ and for a completely arbitrary graph $E$, we characterize the Leavitt path algebras $L_K(E)$ that are indecomposable (as a direct sum of two-sided ideals) in terms of the underlying graph. When the algebra decomposes, it actually does so as a direct sum of Leavitt path algebras for some suitable graphs. Under certain finiteness conditions, a unique indecomposable decomposition exists.
\end{abstract}

\maketitle


\section{Introduction}

Leavitt path algebras can be regarded as the algebraic counterparts of the graph $C^*$-algebras, the descendants of the algebras investigated by J. Cuntz in \cite{Cuntz}, which have had much attention from analysts in the last two decades (see \cite{Raeburn} for an overview of the subject). Leavitt path algebras can also be viewed as a broad generalization of the algebras constructed by W. G. Leavitt in \cite{Leavitt} to produce rings without the Invariant Basis Number property (i.e., whose modules have bases with different cardinality).

The Leavitt path algebra $L_K(E)$ was introduced in 2004 in the papers \cite{AA1} and \cite{AMP}. $L_K(E)$ was first defined for a row-finite graph $E$ (countable graph such that every vertex emits only a finite number of edges) and a field $K$. Despite a relatively recent introduction, they have already generated quite a bit of activity. The main directions of research include: characterization of algebraic properties of a Leavitt path algebra $L_K(E)$ in terms of graph-theoretic properties of $E$; study of the modules over $L_K(E)$; computation of various substructures (such as the Jacobson radical, the center, the socle and the singular ideal); investigation of the relationships with $C^*(E)$ and general $C^*$-algebras; classification programs; study of the $K$-theory; and generalization of the constructions and results first from row-finite to countable graphs and finally, from countable to completely arbitrary graphs.  For examples of each of these directions see for instance \cite{ARV} and the references therein.

In the current paper we focus on the first and last of these lines: concretely, we give a graph-theoretic characterization of the ring-theoretic property of being indecomposable, and we do that in the most general context of arbitrary graphs.

Many times in the literature, the graph-theoretic conditions characterizing some analytic property of the graph $C^*$-algebra (for instance, simplicity in \cite{BPRS} or purely infinite simplicity in \cite{KPR}) turned out to be the same graph-theoretic condition characterizing the corresponding algebraic version of these properties (simplicity in \cite{AA1} and purely infinite simplicity in \cite{AA2}). Even though some efforts are currently being made to obtain some kind of ``Rosetta stone" to transfer information from the analytic to the algebraic world and vice versa, so far this goal has remained elusive. Therefore, the only way to proceed so far is by working out ad hoc methods in either setting (analytic or algebraic) to obtain the desired results. This is the case once more for the property of being indecomposable discussed in the current paper: the analytic result was given in \cite{H} for the $C^*$-algebras $C^*(E)$, and we give here the algebraic analogue for the Leavitt path algebras $L_K(E)$.

The paper is divided as follows. In Section 2 we give the definition of $L_K(E)$, basic properties and main examples. In the next section we prove some preliminary results that will be needed later in the paper; we also recall there the concepts and constructions about arbitrary graphs which will be of use to us throughout the paper. The main result, Theorem \ref{theresult}, is given in Section 4 and it characterizes the indecomposable Leavitt path algebras (i.e., those which cannot be written as a direct sum of nontrivial two-sided ideals). Finally, the last section includes two decomposition results: Corollary \ref{decomposition} and Corollary \ref{twosidednoetherian}, which show that if $L_K(E)$ has certain finiteness conditions (specifically being unital, two-sided noetherian or two-sided artinian), then it can be decomposed uniquely as a direct sum of indecomposable Leavitt path algebras for suitable graphs. The paper finishes off with some examples that illustrate the differences arising when considering arbitrary graphs instead of row-finite ones.


\section{Definition and Examples}

A (directed) {\it graph} $E=(E^{0},E^{1},r,s)$ consists of two sets $E^{0}$ and $
E^{1}$ together with maps $r,s:E^{1}\rightarrow E^{0}$. The elements of $
E^{0}$ are called \textit{vertices} and the elements of $E^{1}$ \textit{edges}. If $s^{-1}(v)$ is a finite set for every $v\in E^{0}$, then the graph is
called \textit{row-finite}.

If a vertex $v$ emits no edges, that is, if $s^{-1}(v)$ is empty, then $v$
is called a\textit{\ sink}. A vertex $v$ is called an \textit{infinite
emitter} if $s^{-1}(v)$ is an infinite set, and $v$ is called a \textit{regular vertex} if $s^{-1}(v)$ is a finite non-empty set. The set of infinite emitters is denoted by $E^0_{\text{inf}}$, and the set of regular vertices is denoted by $E^0_{\text{reg}}$.

All the graphs $E$ that we consider here are arbitrary in the sense that no
restriction is placed either on the number of vertices in $E$ (such as being
a countable graph) or on the number of edges emitted by any vertex (such as
being row-finite).

A path $\mu $ in a
graph $E$ is a finite sequence of edges $\mu =e_{1}\dots e_{n}$ such that $
r(e_{i})=s(e_{i+1})$ for $i=1,\dots ,n-1$. In this case, $n=l(\mu)$ is the length
of $\mu $; we view the elements of $E^{0}$ as paths of length $0$. For any $n\in {\mathbb N}$ the set of paths of length $n$ is denoted by $E^n$. Also, $\text{Path}(E)$ stands for the set of all paths, i.e., $\text{Path}(E)=\bigcup_{n\in {\mathbb N}} E^n$. We denote
by $\mu ^{0}$ the set of the vertices of the path $\mu $, that is, the set $
\{s(e_{1}),r(e_{1}),\dots ,r(e_{n})\}$.

A path $\mu $ $=e_{1}\dots e_{n}$ is \textit{closed} if $r(e_{n})=s(e_{1})$,
in which case $\mu $ is said to be {\it based at the vertex} $s(e_{1})$. The closed path $\mu $ is called a \textit{cycle} if it does not pass through any of its vertices twice, that is, if $s(e_{i})\neq s(e_{j})$ for every $i\neq j$. An \textit{exit }for a path $\mu =e_{1}\dots e_{n}$ is an edge $e$ such that $s(e)=s(e_{i})$ for some $i$ and $e\neq e_{i}$. We say that $E$ satisfies \textit{Condition }(L) if every simple closed path in $E$ has an exit, or, equivalently, every cycle in $E$ has an exit. A graph $E$ is called \emph{acyclic} if it does not have any cycles.

We define a relation $\geq $ on $E^{0}$ by setting $v\geq w$ if there exists
a path $\mu$ in $E$ from $v$ to $w$, that is, $v=s(\mu)$ and $w=r(\mu)$. A subset $H$ of $E^{0}$ is called \textit{hereditary} if $v\geq w$ and $v\in H$ imply $w\in H$. A
set $H \subseteq E^0$ is \textit{saturated} if for any regular vertex $v$, $r(s^{-1}(v))\subseteq H$ implies $v\in H$.

The set $E^{\leq\infty}$ consists of all infinite paths $e_1 e_2 e_3\dots$ together with all paths which end in sinks. A graph is called \emph{cofinal} if for every $p\in E^{\leq\infty}$ and $v\in E^0$, there exists $w\in p^0$ such that $v\geq w$.

For each $e\in E^{1}$, we call $e^{\ast }$ a {\it ghost edge}. We let $r(e^{\ast}) $ denote $s(e)$, and we let $s(e^{\ast })$ denote $r(e)$.

\begin{definition} {\rm Given an arbitrary graph $E$ and a field $K$, the \textit{Leavitt path} $K$\textit{-algebra} $L_{K}(E)$ is defined to be the $K$-algebra generated by a set $\{v:v\in E^{0}\}$ of pairwise orthogonal idempotents together with a set of variables $\{e,e^{\ast }:e\in E^{1}\}$ which satisfy the following
conditions:

(1) $s(e)e=e=er(e)$ for all $e\in E^{1}$.

(2) $r(e)e^{\ast }=e^{\ast }=e^{\ast }s(e)$\ for all $e\in E^{1}$.

(3) (The ``CK-1 relations") For all $e,f\in E^{1}$, $e^{\ast}e=r(e)$ and $
e^{\ast}f=0$ if $e\neq f$.

(4) (The ``CK-2 relations") For every regular vertex $v\in E^{0}$,
\begin{equation*}
v=\sum_{\{e\in E^{1},\ s(e)=v\}}ee^{\ast}.
\end{equation*}}
\end{definition}

An alternative definition for $L_K(E)$ can be given using the extended graph $\widehat{E}$. This graph has the same set of vertices $E^0$ and same set of edges $E^1$ together with the so-called ghost edges $e^*$ for each $e\in E^1$, whose directions are opposite those of the corresponding $e\in E^1$. Thus, $L_K(E)$ can be defined as the usual path algebra $K\widehat{E}$ subject to the Cuntz-Krieger relations (3) and (4) above.

\smallskip

If $\mu = e_1 \dots e_n$ is a path in $E$, we write $\mu^*$ for the element $e_n^* \dots e_1^*$ of $L_{K}(E)$. With this notation it can be shown that the Leavitt path
algebra $L_{K}(E)$ can be viewed as a $K$-vector space span of $\{pq^{\ast } \ \vert \ p,q\,  \hbox{are paths in} \,  E\}$. (The elements of $E^0$ are viewed as paths of length $0$, so that this set includes elements of the form $v$ with $v\in E^0$.)

If $E$ is a finite graph, then $L_{K}(E)$ is unital with $\sum_{v\in E^0} v=1_{L_{K}(E)}$; otherwise, $L_{K}(E)$ is a ring with a set of local units (i.e., a set of elements $X$ such that for every finite collection $a_1,\dots,a_n\in L_K(E)$, there exists $x\in X$ such that $a_ix=a_i=xa_i$) consisting of sums of distinct vertices of the graph.

Many well-known algebras can be realized as the Leavitt path algebra of a graph. The most basic graph configurations are shown below (the isomorphisms for the first three can be found in \cite{AA1}, the fourth in \cite{S}, and the last one in \cite{AA2}).

\begin{examples}\label{examples}{\rm The ring of Laurent polynomials $K[x,x^{-1}]$ is the Leavitt path algebra of the graph given by a single loop graph. Matrix algebras ${\mathbb M}_n(K)$ can be realized by the line graph with $n$ vertices and $n-1$ edges. Classical Leavitt algebras $L_K(1,n)$ for $n\geq 2$ can be obtained by the $n$-rose -- a graph with a single vertex and $n$ loops. Namely, these three graphs are:

$$\begin{matrix} \xymatrix{{\bullet} \ar@(ur,ul)} \hskip3cm &
\xymatrix{{\bullet} \ar [r]  & {\bullet} \ar [r]  & {\bullet} \ar@{.}[r] & {\bullet} \ar [r]  & {\bullet} }
\hskip3cm  & \xymatrix{{\bullet} \ar@(ur,dr)  \ar@(u,r)  \ar@(ul,ur)  \ar@{.} @(l,u) \ar@{.} @(dr,dl)
\ar@(r,d) & }
\end{matrix}$$

\medskip

The algebraic counterpart of the Toeplitz algebra $T$ is the Leavitt path algebra of the graph having one loop and one exit: $$\xymatrix{{\bullet} \ar@(dl,ul) \ar[r] & {\bullet}  }$$

Combinations of the previous examples are possible. For instance, the Leavitt path algebra of the graph

$$\xymatrix{{\bullet} \ar [r]  & {\bullet} \ar [r]  & {\bullet}
\ar@{.}[r] & {\bullet} \ar [r]  & {\bullet}
 \ar@(ur,dr)  \ar@(u,r)  \ar@(ul,ur)  \ar@{.} @(l,u) \ar@{.} @(dr,dl) \ar@(r,d) & }$$ \smallskip

\noindent is ${\mathbb M}_n(L_K(1,m))$, where $n$ denotes the number of vertices in the graph and $m$ denotes the number of loops.
}
\end{examples}

Another useful property of $L_K(E)$ is that it is a graded algebra, that is, it can be decomposed as a direct sum of homogeneous components $L_K(E)=\bigoplus_{n\in {\mathbb Z}} L_K(E)_n$ satisfying $L_K(E)_nL_K(E)_m\subseteq L_K(E)_{n+m}$. Actually, $$L_K(E)_n=\text{span}_K\{pq^*: p,q\in \text{Path}(E) , l(p)-l(q)=nÊ \}.$$

Every element $x_n\in L_K(E)_n$ is a homogeneous element of degree $n$. An ideal $I$ is graded if it inherits the grading of $L_K(E)$, that is, if $I=\bigoplus_{n\in {\mathbb Z}} (I\cap L_K(E)_n)$.

\section{Preliminary results}


In this paper we will be dealing with arbitrary graphs, and therefore many of the technicalities which arise when considering these graphs must be taken into account. The following concepts and results from \cite{T} will be used in the sequel.

A vertex $w$ is called a \textit{breaking vertex }of a hereditary saturated
subset $H$ if $w\in E^{0}\backslash H$ is an infinite emitter with the property that $0< |s^{-1}(w)\cap r^{-1}(E^{0}\backslash H)|<\infty $. The set of all breaking vertices of $H$ is denoted by $B_{H}$. For any $v\in B_{H}$, $v^{H}$ denotes the element $$v^{H}=v-\sum_{s(e)=v,\ r(e)\notin H}ee^{\ast }.$$ Note that the sum is finite if $v$ is a breaking vertex. Given a hereditary saturated subset $H$ and a subset $S\subseteq B_{H}$, a pair $(H,S)$ is called an \textit{admissible pair }and $I_{(H,S)}$ denotes the ideal generated in $L_K(E)$ by $H\cup \{v^{H}:v\in S\}$. It was shown in \cite{T} that the graded ideals of $L_{K}(E)$ are precisely the ideals of the form $
I_{(H,S)}$ for some admissible pair $(H,S)$. Moreover, it was shown that $
I_{(H,S)}\cap E^{0}=H$ and $\{v\in B_{H}:v^{H}\in I_{(H,S)}\}=S$.

Given an admissible pair $(H,S)$, the corresponding \textit{quotient graph} $
E\backslash (H,S)$ is defined as follows:
\begin{align*}
(E\backslash (H,S))^{0}& =(E^{0}\backslash H)\cup \{v^{\prime }:v\in
B_{H}\backslash S\}; \\
(E\backslash (H,S))^{1}& =\{e\in E^{1}:r(e)\notin H\}\cup \{e^{\prime }:e\in
E^{1},r(e)\in B_{H}\backslash S\}.
\end{align*}%
Further, $r$ and $s$ are extended to $(E\backslash (H,S))^{0}$ by setting $
s(e^{\prime })=s(e)$ and $r(e^{\prime })=r(e)^{\prime }$. Note that, in the
graph $E\backslash (H,S)$, the vertices $v^{\prime }$ are all sinks.

The result \cite[Theorem 5.7]{T} states that there is an epimorphism $\phi
:L_{K}(E)\rightarrow L_{K}(E\backslash (H,S))$ with $\ker \phi =$ $I_{(H,S)}$
and that $\phi (v^{H})=v^{\prime }$ for $v\in B_{H}\backslash S$. Thus $L_{K}(E)/I_{(H,S)}\cong L_{K}(E\backslash (H,S))$. This theorem has been established in \cite{T} under the hypothesis that $E$ is a graph with at most countably many vertices and edges; however, an examination of the proof reveals that the countability condition on $E$ is not utilized. So \cite[Theorem 5.7]{T} holds for arbitrary graphs $E$.

Recall that an $K$-algebra $R$ is said to be \emph{indecomposable} if it does not have any decomposition into a direct sum of two nonzero ideals $I$ and $J$ as $R=I\oplus J$. Of course $R$ is called \emph{decomposable} if it is not indecomposable, that is, if there exists at least one such decomposition.

\begin{remark} \label{conectedness} {\rm  In this paper we will be concerned with indecomposable Leavitt path algebras, and we will characterize them in terms of properties of the graph. We note that sometimes in the literature the term ``indecomposable algebra" is equivalent to ``connected algebra" (see for instance \cite[Lemma 1.7]{ASS} where it is stated that for a finite quiver $Q$, the path algebra $KQ$ is connected if and only if the quiver $Q$ is connected).

However we will stick to the first terminology since the second one could be misleading in the sense that the connected Leavitt path algebras are not necessarily the ones whose underlying graph is connected. Actually we can prove the following: $$ E \text{ directly connected } \   \begin{smallmatrix} \Longrightarrow \\ \not\Longleftarrow \end{smallmatrix} \ L_K(E) \text{ connected } \ \begin{smallmatrix} \Longrightarrow \\ \not\Longleftarrow \end{smallmatrix} \  E \text{ connected,}$$ that is, the implications cannot be reversed.

Recall that \emph{directly connectedness} takes into account the direction of the edges (that is: for every $v,w\in E^0$ there exists a path $\mu$ with $s(\mu)=v$ and $r(\mu)=w$), while \emph{connectedness} does not (i.e., for every $v,w\in E^0$ there exists a \emph{walk} from $v$ to $w$, that is, a path in the extended graph $\widehat{E}$ which includes the ghost edges $e^*\in (E^1)^*$ in the reverse direction to any real edge $e\in E^1$).

If $E$ is not connected, then we have two disjoint subgraphs $E_1$ and $E_2$ such that $E$ is the disjoint union $E=E_1\coprod E_2$. An application of \cite[Proposition 2.4]{AC} gives that $L_K(E)=L_K(E_1)\oplus L_K(E_2)$ so that $L_K(E)$ is not connected (i.e., it is decomposable).

To see that the reverse implication is false it is enough to consider the graph $$\xymatrix{{\bullet} & {\bullet} \ar[l]Ê\ar[r] & {\bullet}}$$ which is connected but its Leavitt path algebra is, by \cite[Proposition 3.5]{AAS}, isomorphic to $L_K(E)\cong {\mathbb M}_2(K)\oplus {\mathbb M}_2(K)$, which is not connected.

Now assume that $E$ is directly connected. In this case the graph must be cofinal (clearly, for every path $p\in E^{\leq\infty}$ and every $v\in E^0$, there exists $\mu\in \text{Path}(E)$ and $w\in p^0$ such that $v=s(\mu)$ and $w=r(\mu)$). So we distinguish two cases. First, if there exists a cycle $c$ which does not have an exit then we get: there do not exist vertices outside $c$ (by the directly connectedness); and also there are no further edges beside those of $c$ (otherwise we would have an exit). In this case \cite[Theorem 3.3]{AAS2} gives that $L_K(E)\cong {\mathbb M}_n(K[x,x^{-1}])$ where $n$ is the length of the cycle $c$. This algebra is unital and does not have central idempotents besides $0$ and $1$, hence it is connected. In the second case, that is, if every cycle has an exit, then \cite[Theorem 3.11]{G2} and \cite[Lemma 2.8]{APS} yield that $L_K(E)$ is simple, and therefore also connected.

The other implication does not hold as can be seen for instance with the following graph $$\xymatrix{{\bullet}  \ar[r]Ê&{\bullet} \ar[r] & {\bullet}}$$ whose Leavitt path algebra is $L_K(E)\cong {\mathbb M}_3(K)$, which is connected even though the graph  is not directly connected.
}
\end{remark}

\begin{remark}\label{centraidempotents}{\rm In the literature the term ``indecomposable" also appears as an equivalent statement to ``there are no nontrivial central idempotents". However once more we will only use the first terminology since in our setting these two statements are not equivalent, due to the lack of an identity. An easy example is given by the graph: $$\xymatrix{\dots\ar@{.>}[r] & \bullet \ar[r] &  \bullet \ar[r] & \bullet \ar[r] \ar[dr] & \bullet \\  & & & & \bullet}$$ whose Leavitt path algebra is $L_K(E)\cong {\mathbb M}_\infty (K)Ê\oplus {\mathbb M}_\infty (K)$ by \cite[Proof of Proposition 2.4]{AAPS}. Then $L_K(E)$ is not indecomposable even though it does not have nonzero central idempotents (actually the center of $L_K(E)$ is zero).
}
\end{remark}

It is well-known that the following result is true for the row-finite case of $E$ (see \cite[Proof of Proposition 5.2 and Theorem 5.3]{AMP}). We thank P. Ara and E. Pardo for pointing out to us how to prove that this result remains valid for arbitrary graphs.

\begin{proposition} [\cite{Aprivate,Pprivate}]\label{gradedidempotents} Let $K$ be a field and $E$ an arbitrary graph. Then the following statements are equivalent for a two-sided ideal $I$ of $L_K(E)$:
\begin{enumerate}
\item $I$ is graded.
\item $I$ is generated by idempotents.
\end{enumerate}
\end{proposition}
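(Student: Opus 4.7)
My plan is to prove the two implications separately, leveraging Tomforde's classification of graded ideals as those of the form $I_{(H,S)}$.

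For $(1)\Rightarrow(2)$: every graded ideal equals $I_{(H,S)}$ for some admissible pair and so is generated by $H\cup\{v^{H}:v\in S\}$. Each vertex in $H$ is trivially idempotent, while for $v\in S\subseteq B_{H}$ the element $v^{H}=v-\sum_{\{e\,:\,s(e)=v,\,r(e)\notin H\}}ee^{*}$ is a finite sum by the breaking-vertex condition. Writing $p$ for the sum $\sum ee^{*}$, the CK-1 relations $e^{*}f=\delta_{e,f}\,r(e)$ immediately give $p^{2}=p$ and $vp=p=pv$, whence $(v^{H})^{2}=v^{2}-vp-pv+p^{2}=v-p=v^{H}$.

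For $(2)\Rightarrow(1)$, let $I$ be an ideal generated by idempotents. Set $H:=I\cap E^{0}$ and $S:=\{v\in B_{H}:v^{H}\in I\}$; standard arguments show that $H$ is hereditary and saturated and that $(H,S)$ is admissible. By the first direction the graded ideal $I_{(H,S)}$ is contained in $I$, so it suffices to prove the reverse containment. Passing to the quotient via \cite[Theorem 5.7]{T}, this amounts to showing that the image $\bar{I}$ of $I$ in $L_{K}(E\backslash(H,S))$ is zero. The useful observations are that $\bar{I}$ is again generated by idempotents (the images of the idempotent generators of $I$), and that by our choice of $(H,S)$ the ideal $\bar{I}$ contains neither vertices of the quotient graph nor breaking-vertex projections: any such element would lift to a vertex of $E$ lying in $I\cap E^{0}=H$ (respectively, to some $v^{H}$ with $v\in S$), and would already be annihilated by $\pi$.

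The heart of the proof is therefore the final reduction: showing that in any Leavitt path algebra $L_{K}(F)$ the only ideal that is generated by idempotents, has empty intersection with $F^{0}$, and contains no breaking-vertex projection is the zero ideal. The plan is to combine this hypothesis with Tomforde's structure theorem for arbitrary ideals, which decomposes any ideal as its maximal graded subideal together with principal ideals generated by polynomial expressions $p(c)$ in cycles $c$ without exit in $F$. Our hypothesis forces the graded part to vanish, so $\bar{I}$ must be generated purely by such polynomial generators. However, for any cycle $c$ without exit based at a vertex $v$, the corner $vL_{K}(F)v$ is isomorphic to the Laurent polynomial ring $K[x,x^{-1}]$ via $c\mapsto x$ (using $cc^{*}=v=c^{*}c$), whose only idempotents are $0$ and $1=v$. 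Since $\bar{I}$ is generated by idempotents but would simultaneously have to be described through nonzero cycle-polynomials, the two descriptions are compatible only when $\bar{I}=0$. The main technical obstacle is making this last step fully rigorous, in particular reconciling the two generating descriptions of $\bar{I}$ via a careful corner-by-corner analysis at each cycle without exit.
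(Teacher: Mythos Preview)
Your $(1)\Rightarrow(2)$ direction is correct and in fact cleaner than what the paper does: you simply verify that the explicit generators $H\cup\{v^{H}:v\in S\}$ of a graded ideal are idempotent, whereas the paper argues more abstractly that the $0$-component of a graded ideal is locally matricial, hence von Neumann regular, hence generated by idempotents.

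For $(2)\Rightarrow(1)$, your reduction is sound: passing to $L_{K}(F)$ with $F=E\backslash(H,S)$ and checking $\bar I\cap F^{0}=\emptyset$ is exactly right (your remark about ``no breaking-vertex projections'' is redundant, since once $\bar I\cap F^{0}=\emptyset$ the associated hereditary saturated set is $\emptyset$ and $B_{\emptyset}=\emptyset$). The genuine gap is the step you yourself flag: from ``$\bar I$ is generated by idempotents'' and ``$\bar I\cap F^{0}=\emptyset$'' you want $\bar I=0$. Your proposed mechanism --- that $vL_{K}(F)v\cong K[x,x^{-1}]$ has only trivial idempotents for $v$ on a cycle without exit --- does not close this, because an idempotent generator $e$ of $\bar I$ need not lie in that corner, $vev$ need not be idempotent, and ``generated by idempotents'' does not descend to corners in any obvious way. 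What you actually need is the lemma that \emph{every nonzero idempotent in a Leavitt path algebra generates an ideal containing a vertex}; this is true, but proving it in the presence of cycles without exit requires real input (for example a rank argument for projectives over $K[x,x^{-1}]$, or the monoid $V(L_{K}(F))$).

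This is precisely where the paper's route diverges from yours. The paper does not attempt a direct corner argument; instead it invokes either (a) the monoid-theoretic machinery of \cite{AMP}, whose validity for arbitrary graphs rests on Goodearl's extension \cite[Theorem~5.8]{G2} that $V(L_{K}(E))$ is unperforated, separative and has refinement, or (b) the description of idempotent-generated ideals for separated graphs in \cite[Section~6]{AG}, which shows such ideals are generated by degree-zero idempotents and hence graded. Both of these bypass the obstacle you identified by appealing to structural results that already encode the needed control over idempotents. Your approach would become a genuinely independent and more elementary proof if you could supply the missing lemma directly; as it stands, the argument is incomplete at exactly the point where the paper leans on outside machinery.
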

\begin{proof} The transition to arbitrary graphs can be done in several ways. The first one is via the monoid $V(L_K(E))$ of the finitely generated projective modules of $L_K(E)$. The proofs of \cite[Proposition 5.2 and Theorem 5.3]{AMP} make use of the fact that $V(L_K(E))$ is an unperforated, separative and refinement monoid when $E$ is row-finite. However, Ken Goodearl extended those monoid properties to completely arbitrary graphs in \cite[Theorem 5.8]{G2}, which in turn paves the way for generalizing the aforementioned results for arbitrary graphs.

A second approach is via the Leavitt path algebras of separated graphs (these graphs are generalizations of the usual directed graphs which, considering the trivial partitions, allow us to recover the classical construction of $L_K(E)$). Now, in \cite[Section 6]{AG} the ideals generated by idempotents are described, and from these results one obtains that the ideals generated by idempotents are actually generated by idempotents which are homogeneous of zero degree, and therefore graded. Conversely, even in the arbitrary graph case, every graded ideal is generated by the $0$-component turns out to be locally matricial and therefore von Neumann regular. Thus the $0$-component is generated by idempotents.
\end{proof}

\begin{remark} \label{lattice} {\rm In the following results, we need to use the structure of, and operations on,
the lattice ${\mathcal L}_E$ of the admissible pairs of $E$, originally described in
\cite[Definition 5.4]{T}. However, Mark Tomforde informed us that corrections needed to be made to those operations as described, and
provided us with the corrections. These have not yet appeared in
the literature, so we include them here, and thank Mark Tomforde
for providing them.
 $$(X,S_Y)\wedge (Y,S_Y):=((X\cap Y),((S_X\cup X)\cup (S_Y\cup Y))\cap B_{X\cap Y})$$ $$(X,S_X)\vee (Y,S_Y):=\left(\bigcup_{n=0}^\infty X_n, (S_X\cup S_Y)\cap B_{\cup_{n=0}^\infty X_n}\right)$$ where $X_0:=X\cup Y$ and $$X_{n+1}:=X_n\cup \{v\in E^0_{\text{reg}}: r(s^{-1}(v))\subseteq X_n\} \cup \{v\in S_X\cup S_Y: r(s^{-1}(v))\subseteq X_n\}.$$}
\end{remark}

The following is an easy lemma that states some useful properties about breaking vertices which we will be using throughout the paper.

\begin{lemma}\label{intersectionB} Let $E$ be a graph and $X,Y\in {\mathcal H}_E$ two hereditary and saturated sets of vertices.
\begin{enumerate}
\item If $X=E^0$ or $X=\emptyset$, then $B_X=\emptyset$.
\item If $X\cap Y=\emptyset$, then $B_X\cap B_Y=\emptyset$.
\end{enumerate}
\end{lemma}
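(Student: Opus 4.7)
The plan is to argue both parts directly from the definition of a breaking vertex, with the key observation being that the two sets $s^{-1}(w)\cap r^{-1}(E^0\setminus X)$ and $s^{-1}(w)\cap r^{-1}(E^0\setminus Y)$ jointly cover $s^{-1}(w)$ whenever $X$ and $Y$ are disjoint.

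For (1), I would treat the two extreme cases separately. If $X=E^0$, then $E^0\setminus X=\emptyset$, and the very first requirement ($w\in E^0\setminus X$) for being a breaking vertex cannot be satisfied by any vertex, so $B_X=\emptyset$ trivially. If $X=\emptyset$, then for any candidate infinite emitter $w$ we have $r^{-1}(E^0\setminus X)=r^{-1}(E^0)=E^1$, so $s^{-1}(w)\cap r^{-1}(E^0\setminus X)=s^{-1}(w)$, which is infinite since $w$ is an infinite emitter. Hence the finiteness requirement in the definition of breaking vertex fails, and again $B_X=\emptyset$.

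For (2), I would argue by contradiction. Suppose $w\in B_X\cap B_Y$. Then $w$ is an infinite emitter lying outside both $X$ and $Y$, and both
$$A:=s^{-1}(w)\cap r^{-1}(E^0\setminus X), \qquad B:=s^{-1}(w)\cap r^{-1}(E^0\setminus Y)$$
are nonempty and finite. Since $X\cap Y=\emptyset$, for every edge $e\in s^{-1}(w)$ the vertex $r(e)$ cannot belong to both $X$ and $Y$, hence $r(e)\notin X$ or $r(e)\notin Y$, i.e.\ $e\in A\cup B$. Thus $s^{-1}(w)=A\cup B$, which is a finite union of finite sets and therefore finite, contradicting the fact that $w$ is an infinite emitter. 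Hence $B_X\cap B_Y=\emptyset$.

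There is no real obstacle here; the whole argument is essentially a bookkeeping exercise with the definition, and the only subtle point is noticing in (2) that disjointness of $X$ and $Y$ is exactly what forces $s^{-1}(w)$ to decompose as the union $A\cup B$ (rather than merely being contained in a larger set).
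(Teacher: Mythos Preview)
Your proof is correct and follows essentially the same approach as the paper. For part (2) the paper phrases the contradiction slightly differently---it observes that $s^{-1}(v)\cap r^{-1}(X)$ is infinite (as the complement in $s^{-1}(v)$ of your finite set $A$) and then, using $X\cap Y=\emptyset$, notes that this infinite set is contained in your finite set $B$---but this is just the contrapositive of your covering argument $s^{-1}(w)=A\cup B$, and the underlying idea is identical.
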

\begin{proof}
(i) follows trivially from the definition of a set of breaking vertices: $$B_X=\{w\in E^0_{\text{inf}}\setminus XÊ\ :\ 0<|s^{-1}(w)\cap r^{-1}(E^0\setminus X)|<\infty \}.$$

(ii). Let $v\in B_X\cap B_Y$. Since $v\in B_X$, then $0<|s^{-1}(v)\cap r^{-1}(E^0\setminus X)|<\infty$, which in turn implies, since $v$ is an infinite emitter, that $|s^{-1}(v)\cap r^{-1}(X)|=\infty$. On the other hand, $v\in B_Y$ gives $0<|s^{-1}(v)\cap r^{-1}(E^0\setminus Y)|<\infty$. But now $X\cap Y=\emptyset$ implies that $s^{-1}(v)\cap r^{-1}(X)\subseteq s^{-1}(v)\cap r^{-1}(E^0\setminus Y)$, a contradiction.
\end{proof}

We now prove a proposition which is the first step towards our characterization of the indecomposable Leavitt path algebras.

\begin{proposition}\label{S_1=B_X} Let $E$ be a graph and let $(X,S_1)$ and $(Y,S_2)$ two admissible pairs in $E$ such that $L_K(E)=I_{(X,S_1)}\oplus I_{(Y,S_2)}$. Then
$S_1=B_X$ and $S_2=B_Y$.
\end{proposition}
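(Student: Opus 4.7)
The plan is to translate the direct sum hypothesis into the lattice ${\mathcal L}_E$ of admissible pairs, and then do a case analysis on a putative $v \in B_X \setminus S_1$. Since $I_{(X,S_1)}$ and $I_{(Y,S_2)}$ are graded (they are of the admissible-pair form), their sum and intersection are also graded, and under the correspondence of \cite[Theorem 5.7]{T} (as corrected in Remark \ref{lattice}) they correspond respectively to the join and the meet. So the hypothesis $L_K(E) = I_{(X,S_1)} \oplus I_{(Y,S_2)}$ is equivalent to
\[
(X,S_1) \wedge (Y,S_2) = (\emptyset, \emptyset), \qquad (X,S_1) \vee (Y,S_2) = (E^0, \emptyset).
\]
From the meet we immediately extract $X \cap Y = \emptyset$; from the join, using the explicit formula, we extract that $\bigcup_{n \geq 0} X_n = E^0$ (the second coordinate of the join gives no extra information, as $B_{E^0} = \emptyset$ by Lemma \ref{intersectionB}(i)). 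By symmetry it suffices to prove $S_1 = B_X$, and since $S_1 \subseteq B_X$ always holds by definition of admissible pair, the real content is the reverse containment. So fix $v \in B_X$; in particular $v$ is an infinite emitter with $v \notin X$, and there are infinitely many edges $e$ with $s(e) = v$ and $r(e) \in X$.

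The first case to rule out is $v \in Y$. Picking any such edge $e$ with $r(e) \in X$, the element $ee^*$ lies in $I_{(X,S_1)}$ (because $ee^* = ee^*r(e) \cdot r(e)^{-1}$-multiplication business, or more cleanly $ee^* = e \cdot r(e) \cdot e^* \in L_K(E) \cdot X \cdot L_K(E) \subseteq I_{(X,S_1)}$) and simultaneously $ee^* = s(e) \cdot ee^* = v \cdot ee^* \in I_{(Y,S_2)}$ (since $v \in Y$). Thus $ee^* \in I_{(X,S_1)} \cap I_{(Y,S_2)} = 0$, which is impossible because $e^*(ee^*)e = r(e) \neq 0$ in $L_K(E)$. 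This contradiction forces $v \notin Y$.

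Therefore $v \notin X_0 = X \cup Y$. Since $\bigcup_{n\geq 0} X_n = E^0$, there is a least $n \geq 1$ with $v \in X_n$. Inspecting the recursive definition of $X_n$ from Remark \ref{lattice}, the only way for $v \in X_n \setminus X_{n-1}$ is that $v$ belongs either to $E^0_{\mathrm{reg}}$ or to $S_1 \cup S_2$ (with $r(s^{-1}(v)) \subseteq X_{n-1}$ in either case). The first option is excluded because $v$ is an infinite emitter, so $v \in S_1 \cup S_2$. It remains to exclude $v \in S_2$: if this held, then $v \in B_Y$ as well, so $v \in B_X \cap B_Y$, contradicting Lemma \ref{intersectionB}(ii) since $X \cap Y = \emptyset$. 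Hence $v \in S_1$, finishing the proof that $S_1 = B_X$; the statement $S_2 = B_Y$ follows by interchanging the roles.

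The only step I expect to require care is the handling of the $v \in Y$ case, where one must check that $ee^*$ genuinely sits in both ideals simultaneously — the argument that $ee^* \in I_{(X,S_1)}$ hinges on $r(e) \in X$ rather than on any delicate computation with $v^X$, which keeps the proof clean and avoids needing to untangle what $v^X$ looks like inside the direct sum.
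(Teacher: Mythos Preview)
Your proof is correct and follows essentially the same route as the paper: translate the direct-sum hypothesis into the lattice of admissible pairs, read off $X\cap Y=\emptyset$ and $\bigcup_n X_n=E^0$, and then locate where a putative $v\in B_X$ first enters the $X_n$ filtration to force $v\in S_1$. The only variation is in ruling out $v\in Y$: you produce a nonzero $ee^*$ lying in both ideals, whereas the paper argues purely graph-theoretically (if $v\in Y$ and $r(e)\in X$, hereditariness of $Y$ gives $r(e)\in X\cap Y=\emptyset$); both are fine, though the latter is a touch shorter given you already extracted $X\cap Y=\emptyset$.
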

\begin{proof}
We will show that $S_1=B_X$ and then $S_2=B_Y$ will be analogous. With the lattice operation detailed in Remark \ref{lattice} above, we have a lattice isomorphism between the set of graded ideals ${\mathcal L}_{gr}(L_K(E))$ and the lattice of admissible pairs ${\mathcal H}_E$, see \cite[Theorem 5.7 (1)]{T}. Therefore we have $$L_K(E)=I_{(X,S_1)}\oplus I_{(Y,S_2)}=I_{(X,S_1)\vee (Y,S_2)}=I_{(\bigcup_{n=0}^\infty X_n, (S_1\cup S_2)\cap B_{\cup_{n=0}^\infty X_n})},$$ where the $X_n$'s are those of Remark \ref{lattice}.

Now suppose that there is a vertex $v\in E^0$ such that $v\in B_X\setminus S_1$. Recall that we always have that $X=\{w\in E^0\ |\ w\in I_{(X,S_1)}\}$ (see for instance the proof of \cite[Theorem 5.7 (1)]{T}), and therefore $v\in \bigcup_{n=0}^\infty X_n$. We will show, step by step, that this situation is not possible.

\underline{Base Case}: $v\not\in X_0$. Suppose on the contrary that $v\in X_0=X\cup Y$. Recall that the set of breaking vertices of $X$ is given by $$B_X=\{w\in E^0_{\text{inf}}\setminus XÊ\ :\ 0<|s^{-1}(w)\cap r^{-1}(E^0\setminus X)|<\infty \},$$ and so since by hypothesis $v\in B_X$, we must have that $v\not\in X$ by the first condition defining the set $B_X$, which in turn implies that $v\in Y$.

On the other hand, the second condition, together with the fact that $v$ must be an infinite emitter, show that there exists an edge $e\in E^1$ such that $v=s(e)$ and $r(e)\in X$. Now apply that  $Y$ is a hereditary subset to obtain that $r(e)\in Y$, a contradiction with the fact that $X$ and $Y$ are disjoint.

\underline{Inductive Step}: $v\not\in X_n$ implies $v\not\in X_{n+1}$. We proceed again by contradiction and suppose that $v\in X_{n+1}$, which we recall is the union of three different sets of vertices $$X_{n+1}:=X_n\cup \{v\in E^0_{\text{reg}}: r(s^{-1}(v))\subseteq X_n\} \cup \{v\in S_1\cup S_2: r(s^{-1}(v))\subseteq X_n\}.$$ By the induction hypothesis $v$ does not belong to the first set, and it also does not belong to the second set as $v$ is an infinite emitter. Thus, $v$ must belong to the third set.

Now use the hypothesis that $v\not\in S_1$ to get that $v\in S_2\subseteq B_Y$. And hence $v\in B_X\cap B_Y=\emptyset$, by Lemma \ref{intersectionB}, a contradiction.
\end{proof}


\section{Decomposable Leavitt path algebras}

We now give a characterization of decomposable Leavitt path algebras in terms
only of the underlying graph. In this section, we state our main
theorem, then prove it in two parts. The theorem is the Leavitt
path algebra sibling of the corresponding $C^*$-algebra result given by
Hong in \cite[Theorem 4.1]{H}.

\begin{definition}\label{def} {\rm Let $E$ be an arbitrary graph and $X\in {\mathcal H}_E$ be a saturated hereditary subset. An $X$-compatible path is a directed path of positive length $\alpha=\alpha_1\dots\alpha_n$ satisfying the following two conditions:

\begin{enumerate}
\item[{\rm (i)}] $r(\alpha_n)\in X$.

\item[{\rm (ii)}] $s(\alpha_n)\not\in X\cup B_X$.
\end{enumerate}
}
\end{definition}

Note that the notion of $X$-compatible path, as defined above, arises in other parts of the graph algebra
literature: If $X$ is a saturated hereditary subset, the ideal $I_X$ is isomorphic
to the graph algebra of a graph $E$, and the construction of $E$ uses the $X$-compatible paths defined above (see the set $F_1(X,\emptyset)$ in \cite[Definition 4.1]{Ruiz}).\\

\begin{theorem}\label{theresult} Let $E$ be an arbitrary graph. The Leavitt path algebra $L_K(E)$ is decomposable if and only if there exist nontrivial hereditary and saturated subsets $X,Y\in {\mathcal H}_E$ such that $X\cap Y=\emptyset$ and, for every  $v\in E^0\setminus (X\cup Y)$, there exists at least one but finitely many paths starting at $v$ that are either $X$-compatible paths or $Y$-compatible paths.
\end{theorem}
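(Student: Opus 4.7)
The plan is to prove both implications using the lattice of admissible pairs from Remark \ref{lattice} together with Propositions \ref{gradedidempotents} and \ref{S_1=B_X}; the same recursive family $X_0 = X \cup Y$, $X_{n+1} = X_n \cup \{w \in E^0_{\text{reg}} : r(s^{-1}(w)) \subseteq X_n\} \cup \{w \in B_X \cup B_Y : r(s^{-1}(w)) \subseteq X_n\}$ governing the join formula appears in both directions, and the path-theoretic condition is the combinatorial translation of the statement that every vertex outside $X \cup Y$ eventually enters some $X_n$.

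For the forward implication, assume $L_K(E) = I \oplus J$ with $I, J$ nontrivial. A local-unit argument first shows that $I$ and $J$ are generated by idempotents: given $a \in I$ and a local unit $x$ with $xa = a = ax$, the decomposition $x = x_I + x_J$ (with $x_I \in I$, $x_J \in J$) makes $x_I$ and $x_J$ idempotents (the cross terms in $x^2 = x$ vanish since $I \cap J = 0$), and $a = x_I a x_I$. Hence Proposition \ref{gradedidempotents} says both are graded, so by \cite[Theorem 5.7]{T} we may write $I = I_{(X, S_1)}$ and $J = I_{(Y, S_2)}$ with $X, Y$ nontrivial, disjoint, hereditary and saturated. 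Proposition \ref{S_1=B_X} gives $S_1 = B_X$, $S_2 = B_Y$, and the identity $I + J = I_{(E^0, \emptyset)}$ forces $\bigcup_{n \geq 0} X_n = E^0$ via the join formula.

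For each $v \in E^0 \setminus (X \cup Y)$, I would induct on the minimal $n$ with $v \in X_n$ to produce at least one but finitely many $X$- or $Y$-compatible paths starting at $v$. If $v$ is regular with $r(s^{-1}(v)) \subseteq X_{n-1}$, the finitely many edges from $v$ either enter $X \cup Y$ directly and are themselves length-one compatible paths (since $v$ regular implies $v \notin B_X \cup B_Y$), or may be prepended to the compatible paths supplied by the inductive hypothesis at their range. If $v \in B_X \cup B_Y$, Lemma \ref{intersectionB}(ii) forces $v$ to lie in exactly one of $B_X$, $B_Y$, say $B_X$; the finitely many edges from $v$ with range outside $X$ are handled analogously, while the (infinitely many) edges into $X$ do not extend to any compatible path since the hereditary set $X$ can neither exit to $Y$ nor furnish a final edge whose source avoids $X \cup B_X$.

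For the reverse implication, set $I := I_{(X, B_X)}$ and $J := I_{(Y, B_Y)}$. Disjointness $X \cap Y = \emptyset$ together with Lemma \ref{intersectionB}(i) gives $B_{X \cap Y} = \emptyset$, so the meet formula of Remark \ref{lattice} yields $I \cap J = I_{(\emptyset, \emptyset)} = 0$. The join formula reduces $I + J = L_K(E)$ to showing $\bigcup_{n \geq 0} X_n = E^0$. For $v \notin X \cup Y$, let $N(v)$ be the maximum length of a path in the finite nonempty set $\mathcal{P}(v)$ of $X$- or $Y$-compatible paths from $v$, and induct on $N(v)$. The crucial observation is that $\beta \mapsto e\beta$ injects $\mathcal{P}(r(e))$ into $\mathcal{P}(v)$ whenever $r(e) \notin X \cup Y$, so at most $|\mathcal{P}(v)|$ edges from $v$ have range outside $X \cup Y$; combined with the length-one compatible paths from edges landing in $X \cup Y$, this yields $r(s^{-1}(v)) \subseteq X_k$ for some $k$ and hence $v \in X_{k+1}$ via the regular or breaking-vertex clause. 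The main obstacle lies in the case when $v$ is an infinite emitter: the finiteness of $\mathcal{P}(v)$ has to be leveraged to force $v \in B_X \cup B_Y$ (otherwise all edges into $X$ and into $Y$ would be compatible and would need to exhaust the infinite $s^{-1}(v)$), after which the defining finiteness of $|s^{-1}(v) \cap r^{-1}(E^0 \setminus X)|$ keeps the bookkeeping under control while the remaining infinite supply of edges into $X$ is absorbed harmlessly.
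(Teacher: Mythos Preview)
Your forward implication is essentially identical to the paper's Proposition \ref{secondimplication}: the same local-unit argument, Proposition \ref{gradedidempotents}, Proposition \ref{S_1=B_X}, and induction along the filtration $X_n$.

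The reverse implication, however, takes a genuinely different route from the paper's Proposition \ref{firstimplication}. The paper never invokes the join formula to prove $I_{(X,B_X)}+I_{(Y,B_Y)}=L_K(E)$; instead it shows directly that every vertex lies in the ideal $\mathcal K$ by induction on the \emph{number} of $XY$-compatible paths based at $v$, walking backwards along a chosen path and applying the (CK2) relation or the identity $s(\alpha_k)=s(\alpha_k)^X+\alpha_k\alpha_k^*$ at each step. You instead reduce the problem, via the join in Remark \ref{lattice} and Lemma \ref{intersectionB}(i), to the purely combinatorial statement $\bigcup_n X_n=E^0$, and prove that by induction on the \emph{maximum length} $N(v)$, using the injection $\beta\mapsto e\beta$ from $\mathcal P(r(e))$ into $\mathcal P(v)$ to bound the number of edges leaving $v$ whose range lies outside $X\cup Y$. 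Your argument is correct: the injection forces any infinite emitter $v\notin X\cup Y$ into $B_X\cup B_Y$ (else all three classes of edges from $v$---into $X$, into $Y$, and into neither---would be finite), and the strict decrease $N(r(e))<N(v)$ drives the induction. What your approach buys is symmetry: both directions of the theorem become statements about the same filtration $X_n$, and no explicit manipulation of the elements $v^H$ or the (CK2) relations is needed. What the paper's approach buys is independence from the corrected join formula of Remark \ref{lattice}: its argument lives entirely inside the algebra and would survive even if one distrusted that lattice computation.
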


We split the proof of the theorem into several pieces that we will later put together.

\begin{lemma}\label{intersectionB} Let $E$ be a graph, $X,Y\in {\mathcal H}_E$ two disjoint hereditary and saturated sets of vertices and $\mu=\alpha_1\dots\alpha_n\in \text{Path}(E)$. Then $\mu$ is either $X$-compatible or $Y$-compatible if and only if $\mu$ satisfying the properties:
\begin{enumerate}
\item[{\rm (i)}] $\alpha_i\not\in \{\alpha\in E^1 \ |\ s(\alpha)\in B_X, r(\alpha)\in X \}\cup \{\alpha\in E^1 \ |\ s(\alpha)\in B_Y, r(\alpha)\in Y \} $ and $\alpha_i\in E^1$ for all $i=1,\cdots,n$.

\item[{\rm (ii)}] $r(\alpha_n)\in X\cup Y$, and $s(\alpha_i)\not\in X\cup Y$ for all $i=1,\dots,n$.
\end{enumerate}
\end{lemma}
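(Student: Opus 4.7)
The plan is to verify both directions by chasing the definitions, using three basic facts: hereditariness of $X$ and $Y$, the defining property of a breaking vertex (that $B_X\subseteq E^0\setminus X$), and the disjointness $X\cap Y=\emptyset$. Throughout, by the symmetry between $X$ and $Y$, I will focus on the $X$-side and indicate where the $Y$-side is obtained by an identical argument.

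For the forward direction, assume $\mu$ is $X$-compatible, so $r(\alpha_n)\in X$ and $s(\alpha_n)\notin X\cup B_X$. The inclusion $r(\alpha_n)\in X\subseteq X\cup Y$ gives half of (ii). For $s(\alpha_i)\notin X\cup Y$, observe that $s(\alpha_i)\geq s(\alpha_n)$ along the subpath $\alpha_i\cdots\alpha_{n-1}$ (the inequality being an equality when $i=n$), so hereditariness of $X$ forbids $s(\alpha_i)\in X$ (otherwise $s(\alpha_n)\in X$, contradicting $X$-compatibility), and hereditariness of $Y$ forbids $s(\alpha_i)\in Y$ (otherwise $r(\alpha_n)\in Y$, contradicting $r(\alpha_n)\in X$ and $X\cap Y=\emptyset$). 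For condition (i), the assertion $\alpha_i\in E^1$ is automatic from $\mu\in\text{Path}(E)$ having positive length. To dispatch the two forbidden edge sets: if $s(\alpha_i)\in B_X$ and $r(\alpha_i)\in X$, then either $i<n$ and $s(\alpha_{i+1})=r(\alpha_i)\in X$ violates the half of (ii) just established, or $i=n$ and $s(\alpha_n)\in B_X$ directly contradicts $X$-compatibility; the case $s(\alpha_i)\in B_Y$, $r(\alpha_i)\in Y$ is ruled out because $r(\alpha_i)\in Y$ propagates by hereditariness of $Y$ to $r(\alpha_n)\in Y$, contradicting $r(\alpha_n)\in X$ and $X\cap Y=\emptyset$.

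For the reverse direction, suppose $\mu$ satisfies (i) and (ii). The first half of (ii) forces $r(\alpha_n)\in X$ or $r(\alpha_n)\in Y$ (exclusively, by disjointness). Assume $r(\alpha_n)\in X$. The second half of (ii) at $i=n$ gives $s(\alpha_n)\notin X$. If, in addition, $s(\alpha_n)\in B_X$, then the edge $\alpha_n$ itself would lie in the first forbidden set of (i), a contradiction. Hence $s(\alpha_n)\notin X\cup B_X$, so $\mu$ is $X$-compatible; the case $r(\alpha_n)\in Y$ yields $Y$-compatibility by the symmetric argument.

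The main obstacle, such as it is, is purely organizational: one must carefully separate the terminal edge $\alpha_n$ from the internal edges, and track how each of the two forbidden edge sets in (i) interacts with the two possible endpoints allowed by (ii). No deeper machinery than the three facts cited at the outset is required.
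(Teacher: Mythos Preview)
Your proof is correct and follows essentially the same definition-chasing route as the paper's own argument. The only organizational difference is that, in verifying condition~(i) for the forward direction, the paper first observes that property~(ii) forces every edge except $\alpha_n$ to have range outside $X\cup Y$ (so only $\alpha_n$ can lie in the forbidden sets) and then handles $\alpha_n$ alone, whereas you treat each $\alpha_i$ directly with an $i<n$ versus $i=n$ split; these amount to the same thing.
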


\begin{proof}
Assume that $\mu$ satisfies the properties (i) and (ii), then Condition (ii)
implies either $r(\alpha_n)\in X$ or $r(\alpha_n)\in Y$. If $r(\alpha_n)\in X$, then
Condition (i) implies $\mu$ is $X$-compatible. If
$r(\alpha_n)\in Y$, then Condition (i) implies $\mu$ is $Y$-compatible.
Conversely, suppose $\mu$ is a path that is either $X$-compatible or $Y$-compatible. If $\mu$ is $X$-compatible, then $r(\alpha_n)\in X$ by condition (i) of \ref{def} and
$s(\alpha_n)\not\in X$ by condition (ii) of \ref{def}, and the fact $X$ is hereditary implies $s(\alpha_i)\not\in X$
for all $i$. Likewise, by the fact $Y$ is hereditary and $X$ is disjoint from $Y$,
$r(\alpha_n)\in X$ implies that $s(\alpha_i)\not\in Y$ for all $i$. Thus property (ii) holds. In addition, if $\mu$ is $X$-compatible, then condition (ii) of \ref{def} implies $s(\alpha_n)\not\in B_X$. Since no edge other than $\alpha_n$ has range in $X\cup Y$, it follows that the property
(i) holds. Likewise, a $Y$-compatible
path satisfies the properties (i) and (ii).
\end{proof}

\begin{definition} {\rm Let $E$ be an arbitrary graph and let $X,Y\in {\mathcal H}_E$ be hereditary and saturated subsets such that $X\cap Y=\emptyset$. A path $\mu=\alpha_1\dots\alpha_k\in \text{Path}(E)$ that is either $X$-compatible or $Y$-compatible is call an \emph{$XY$-compatible path}. The edges $\alpha_i\in E^1$ constituting the path $\alpha$ are called \emph{$XY$-compatible edges}.}
\end{definition}

\begin{proposition}\label{firstimplication} Let $E$ be an arbitrary graph containing two nontrivial hereditary and saturated subsets $X,Y\in {\mathcal H}_E$ such that $X\cap Y=\emptyset$ and, for every $v\in E^0\setminus (X\cup Y)$, there exists at least one but finitely many $XY$-compatible paths starting at $v$. Then $L_K(E)=I_{(X,B_X)}\oplus I_{(Y,B_Y)}$.
\end{proposition}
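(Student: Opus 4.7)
The plan is to exploit the lattice isomorphism of \cite[Theorem 5.7(1)]{T} between the graded two-sided ideals of $L_K(E)$ and the admissible pairs in $\mathcal{H}_E$. Both $I_{(X, B_X)}$ and $I_{(Y, B_Y)}$ are graded by construction, so their intersection and sum are also graded and correspond respectively to the meet $(X, B_X) \wedge (Y, B_Y)$ and the join $(X, B_X) \vee (Y, B_Y)$ under the operations spelled out in Remark \ref{lattice}. The claimed direct sum decomposition thus reduces to two computations: the meet equals $(\emptyset, \emptyset)$ (giving zero intersection) and the join equals $(E^0, \emptyset)$ (giving the sum equal to all of $L_K(E)$).

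The meet identity is immediate from the formula in Remark \ref{lattice}: since $X \cap Y = \emptyset$, the first part of the breaking-vertex lemma of Section 3 yields $B_{X \cap Y} = B_\emptyset = \emptyset$, so both coordinates collapse. For the join, write $X_0 = X \cup Y$ and iterate as in Remark \ref{lattice} with $S_X \cup S_Y = B_X \cup B_Y$; the empty second coordinate follows from $B_{E^0} = \emptyset$ once we prove $\bigcup_{n=0}^\infty X_n = E^0$. For that, I define, for each $v \in E^0 \setminus (X \cup Y)$, the positive integer $f(v) := \max\{l(\mu) : \mu \text{ is an } XY\text{-compatible path from } v\}$, which is well defined by the finiteness hypothesis, and show $v \in X_n$ for some $n$ by strong induction on $f(v)$. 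The key auxiliary observation, verified through the characterization of $XY$-compatible paths given in the preceding lemma of this section, is that whenever $e \in s^{-1}(v)$ has $r(e) \notin X \cup Y$, prepending $e$ to any $XY$-compatible path from $r(e)$ yields an $XY$-compatible path from $v$; in particular $f(r(e)) \leq f(v) - 1$, so the inductive hypothesis applies to $r(e)$.

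The inductive step branches on the nature of $v$. If $v$ is regular, its finitely many edges all land in some $X_m$ (either directly in $X_0$ or by induction), and taking the maximum index places $v$ into $X_{m+1}$ via the regular-vertex clause. A sink cannot occur in $E^0 \setminus (X \cup Y)$ since it has no $XY$-compatible paths. If $v$ is an infinite emitter, one must first argue that $v \in B_X \cup B_Y$: otherwise either (a) infinitely many edges from $v$ would land in $E^0 \setminus (X \cup Y)$, each furnishing at least one $XY$-compatible extension and producing infinitely many $XY$-compatible paths from $v$, or (b) infinitely many edges would land in $X$ (respectively $Y$) while $v \notin B_X$ (respectively $B_Y$), making each such single-edge path $X$-compatible (respectively $Y$-compatible), again infinitely many. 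Once $v \in B_X \cup B_Y$, say $v \in B_X$, the second part of the breaking-vertex lemma of Section 3 rules out $v \in B_Y$, and the definition of $B_X$ restricts $r(s^{-1}(v)) \setminus X$ to a finite set, so the inductive maxima exist and $v \in X_{m+1}$ via the breaking-vertex clause.

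The main obstacle is this infinite-emitter analysis: showing that the finiteness of $XY$-compatible paths structurally forces every such $v$ into $B_X \cup B_Y$ is the nontrivial content of the argument. The remainder — verifying the lattice-theoretic reduction, manipulating the meet and join formulas, and promoting $v$ through the appropriate clauses of the $X_n$ iteration — is routine bookkeeping once that structural restriction is isolated.
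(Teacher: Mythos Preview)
Your argument is correct, and it takes a genuinely different route from the paper's own proof. Both proofs handle the meet $(X,B_X)\wedge(Y,B_Y)=(\emptyset,\emptyset)$ the same way, but for the inclusion $L_K(E)\subseteq I_{(X,B_X)}+I_{(Y,B_Y)}$ the paper works \emph{inside the algebra}: it sets $\mathcal{K}=I_{(X,B_X)}\oplus I_{(Y,B_Y)}$ and shows directly that every vertex $v\in E^0\setminus(X\cup Y)$ lies in $\mathcal{K}$, by induction on the \emph{number} of $XY$-compatible paths from $v$, walking backwards along a chosen path and invoking the CK2 relation or the element $s(\alpha_k)^X$ at each step according to whether the vertex is regular or a breaking vertex. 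You instead push the lattice isomorphism further and reduce to the purely graph-theoretic claim $\bigcup_n X_n=E^0$, inducting on the \emph{maximal length} $f(v)$ of an $XY$-compatible path from $v$ and feeding vertices into the appropriate clause of the $X_{n+1}$ recursion. Your approach has the advantage of never touching elements of $L_K(E)$ once the lattice translation is made; the paper's approach is more explicit and constructive, exhibiting each $v$ concretely as a sum of terms in $\mathcal{K}$. The infinite-emitter dichotomy you isolate (that the finiteness hypothesis forces any infinite emitter outside $X\cup Y$ into $B_X\cup B_Y$) is implicit in the paper's case analysis as well, though the paper organizes it around the algebraic expressions rather than stating it as a structural lemma.
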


\begin{proof}
The sum is direct because of the lattice isomorphism between admissible pairs and graded ideals. We have: $I_{(X,B_X)}\cap I_{(Y,B_Y)}=I_{(X,B_X)\wedge (Y,B_Y)}$; and we use Remark \ref{lattice} to get that $(X,B_X)\wedge (Y,B_Y)=(X\cap Y,Z)=(\emptyset,Z)$ for some subset $Z$ that satisfies $Z\subseteq B_{X\cap Y}=B_\emptyset=\emptyset$ (by Lemma \ref{intersectionB} (1)). Thus, taking into account the way the ideal $I_{(H,S)}$ is defined, we obtain $I_{(X,B_X)}\cap I_{(Y,B_Y)}=I_{(\emptyset,\emptyset)}=0$.

Define now ${\mathcal K}:=I_{(X,B_X)}\oplus I_{(Y,B_Y)}$ and we will show that ${\mathcal K}=L_K(E)$. Recall that $I_{(X,B_X)}\cap E^0=X$ and thus $X\cup Y\subseteq {\mathcal K}$. On the other hand, since ${\mathcal K}$ is a two-sided ideal of $L_K(E)$, it is enough to show that ${\mathcal K}$ contains all the vertices in $E^0$ (since the set of sums of distinct vertices of the graph always constitutes a set of local units for $L_K(E)$). Therefore, our task will be to show that $v\in {\mathcal K}$ for every $v\in E^0\setminus (X\cup Y)$. Assume we have picked such $v$ and proceed by induction on the number $n$ of different $XY$-compatible paths which start at that $v$ (our hypothesis says that such $n$ exists and is nonzero).

\underline{Base Case}: $n=1$. Let $\mu=\alpha_1\dots \alpha_k$ be the only $XY$-compatible path starting at $v$. We have $r(\alpha_k)\in X\cup Y\subseteq {\mathcal K}$, and we will be proceeding backwards along the path $\mu$ proving inductively that $s(\alpha_k)\in {\mathcal K}$ first, then $s(\alpha_{k-1})\in {\mathcal K}$, and so on until getting $s(\alpha_1)=v\in {\mathcal K}$. We distinguish cases:

$\circ$ Case 1: $s(\alpha_k)\not\in B_X\cup B_Y$. In this situation we have that $\alpha_k$ is the only edge emitted by $s(\alpha_k)$. Otherwise, suppose that there exists $e\in E^1\setminus\{\alpha_k\}$. Two things can happen: if $r(e)\in X\cup Y$ then $\nu:=\alpha_1\dots\alpha_{k-1}e$ is a $XY$-compatible path starting at $v$ (because $s(e)\not\in B_X\cup B_Y$). This new path is different from $\mu$, a contradiction with our hypothesis.

On the other hand, if $r(e)\not\in X\cup Y$, then by hypothesis there exists a $XY$-compatible path $\sigma\in \text{Path}(E)$ starting at $r(e)$, and again since $s(e)\not\in B_X\cup B_Y$ then $\alpha_1\dots\alpha_{k-1}e\sigma$ is a $XY$-compatible path starting at $v$, which is different from $\mu$.

Therefore we have $s^{-1}(s(\alpha_k))=\{\alpha_k\}$, and an application of the (CK2) relation gives that $$s(\alpha_k)=\alpha_k\alpha_k^*=\alpha_k\ r(\alpha_k)\ \alpha_k^*\in {\mathcal K} ,$$ as needed.

$\circ$ Case 2: $s(\alpha_k)\in B_X$. We will prove that in this case $\alpha_k$ is the only edge starting at $s(\alpha_k)$ whose range lies outside $X$. Suppose on the contrary that there exists $f\in E^1$ such that $f\neq \alpha_k$, $s(f)=s(\alpha_k)$ and $r(f)\not\in X$.

Two things can happen: if $r(f)\in Y$ then $\alpha_1\dots \alpha_{k-1}f$ is a $XY$-compatible path starting at $v$, and this cannot happen as $\mu$ was the only such path. In case $r(f)\not\in Y$, then there exists a $XY$-compatible path $\sigma\in \text{Path}(E)$ starting at $r(f)$, and because $f$ is a $XY$-compatible edge, we get $\alpha_1\dots\alpha_{k-1}f\sigma$ is a $XY$-compatible path starting at $v$, which is different from $\mu$.

Thus, $s^{-1}(s(\alpha_k))\cap r^{-1}(E^0\setminus X)=\{\alpha_k\}$ and this yields $s(\alpha_k)^{X}=s(\alpha_k)-\alpha_k\alpha_k^*$. Now, taking into account that $I_{(X,B_X)}$ is generated as a two-sided ideal by $X\cup \{v^X : v\in B_X \}$ we get $$s(\alpha_k)=s(\alpha_k)^{X}+\alpha_k\ r(\alpha_k)\ \alpha_k^*\in I_{(X,B_X)} + L_K(E)\ {\mathcal K} \ L_K(E)\subseteq {\mathcal K},$$ as we wanted to prove.

$\circ$ Case 3: $s(\alpha_k)\in B_Y$. Analogous to Case 2.

Thus, in any case we obtain $s(\alpha_k)\in {\mathcal K}$. Now proceeding in the same fashion with $\alpha_{k-1}$ by distinguishing the three cases above with the vertex $s(\alpha_{k-1})$, we would conclude that $s(\alpha_{k-1})\in {\mathcal K}$, and repeating this process $k$ times we eventually reach $v$, hence proving that $v\in {\mathcal K}$.

\underline{Induction Step}: We now assume that whenever $w\in E^0\setminus(X\cup Y)$ is such that there exists at most $n$ different $XY$-compatible paths starting at $w$, then necessarily $w\in {\mathcal K}$.

Suppose then that the vertex $v\in E^0\setminus (X\cup Y)$ is such that there exists exactly $n+1$ different $XY$-compatible paths starting at $v$. We need to show that in this situation $v\in {\mathcal K}$.

Let $\mu=\alpha_1\dots \alpha_k$ be an $XY$-compatible path starting at $v$, and choose $m$ to be the minimal index $i$ such that the vertex $s(\alpha_i)$ is the base of at least two different $XY$-compatible edges (such vertex exists because otherwise $\mu$ would be the only $XY$-compatible path starting at $v$). We note that the following property holds: $$ \text{If $g$ is a $XY$-compatible edge such that $s(g)=s(\alpha_m)$, then necessarily $r(g)\in {\mathcal K}$}.\ \ (\dag)$$ Indeed, if $r(g)\in X\cup Y$, then immediately $r(g) \in {\mathcal K}$; and in case $r(g)\not\in X\cup Y$, then there exists at most $n$ different $XY$-compatible paths starting at $r(g)$ (each of them would provide an $XY$-compatible path starting at $v$ just by appending $\alpha_1\dots \alpha_{m-1}g$ on the left and, together with $\mu$, we cannot have more than $n+1$ such paths). Thus, the induction hypothesis applies at $r(g)$ and we obtain $r(g)\in {\mathcal K}$.

We now distinguish two cases:

$\circ$ Case 1: $s(\alpha_m)\not\in B_X\cup B_Y$. In this case we have that every $\alpha\in E^1$ with $s(\alpha)=s(\alpha_m)$ is a $XY$-compatible edge, and therefore we must have finitely many such edges (otherwise we would have infinitely many $XY$-compatible paths starting at $v$). Therefore $$s(\alpha_m)=\sum_{\{f\in E^1\ :\ s(f)=s(\alpha_m)\}}^{\text{finite}} ff^*=\sum_{s(f)=s(\alpha_m)} f\ r(f)\ f^*\in {\mathcal K},$$ where we have used property $(\dag)$ above in the last step.

$\circ$ Case 2: $s(\alpha_m)\in B_X$ (the case $s(\alpha_m)\in B_Y$ is similar). Here we have that the set $\{f\in E^1\ :\ s(f)=s(\alpha_m) \text{ and } r(f)\not\in X\}$ must be finite (note that they constitute $XY$-compatible edges and argue as in Case 1). On the other hand, since $s(\alpha_m)\in B_X$, we know that $s(\alpha_m)^X\in I_{(X,B_X)}$ and all this gives: \begin{align*} s(\alpha_m) =& s(\alpha_m)^X+\sum_{\{f\in E^1\ :\ s(f)=s(\alpha_m) \text{ and } r(f)\not\in X\}}^{\text{finite}} ff^*= \\ = & s(\alpha_m)^X + \sum_{s(f)=s(\alpha_m),\ r(f)\not\in X} f\ r(f)\ f^*\in I_{(X,B_X)} + L_K(E)\ {\mathcal K}L_K(E) \subseteq {\mathcal K}, \end{align*} where we have made use of property $(\dag)$ in the last line.

Now in either case we get that $s(\alpha_m)\in {\mathcal K}$ and the minimality of $m$ implies that there exists only one path $\sigma$ from $v$ to $s(\alpha_m)$ entirely made of $XY$-compatible edges. In this situation we walk backwards along $\sigma$ as in the Base Case above proving that $s(\alpha_{m-1})\in {\mathcal K}$, and after $m$ steps arriving at $s(\alpha_1)=v\in  {\mathcal K}$. This finishes the proof.
\end{proof}

\begin{proposition} \label{secondimplication} Let $E$ be an arbitrary graph such that the Leavitt path algebra $L_K(E)$ is decomposable. Then there exist two nontrivial hereditary and saturated subsets $X,Y\in {\mathcal H}_E$ such that $X\cap Y=\emptyset$ and, for every $v\in E^0\setminus (X\cup Y)$, there exists at least one but finitely many $XY$-compatible paths starting at $v$.
\end{proposition}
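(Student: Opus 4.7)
Write $L_K(E) = I \oplus J$ with $I, J$ nonzero two-sided ideals. The plan is to (i) argue that $I$ and $J$ must both be graded, (ii) use the lattice description to obtain hereditary saturated sets $X, Y$ with $I = I_{(X, B_X)}$, $J = I_{(Y, B_Y)}$, $X \cap Y = \emptyset$, and $X, Y \neq \emptyset$, and (iii) read the $XY$-compatible path condition off the lattice join formula in Remark \ref{lattice} by induction.

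\textbf{Steps (i) and (ii).} For each $v \in E^0$ write $v = v_I + v_J$ with $v_I \in I$, $v_J \in J$; since $I \cap J = 0$ and $v^2 = v$, the elements $v_I, v_J$ are orthogonal idempotents, and orthogonality of distinct vertices gives $v_I w_I = 0$ for $v \neq w$. For $a \in I$ with local unit $u = v_1 + \dots + v_n$ satisfying $ua = au = a$, splitting $u = u_I + u_J$ forces $u_J a \in I \cap J = 0$, so $u_I a = a = a u_I$; hence $I$ is generated as an ideal by the idempotents $\{v_I : v \in E^0\}$, and Proposition \ref{gradedidempotents} declares $I$ graded. The same works for $J$. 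Then by the lattice isomorphism \cite[Theorem 5.7(1)]{T}, $I = I_{(X, S_1)}$ and $J = I_{(Y, S_2)}$; Proposition \ref{S_1=B_X} forces $S_1 = B_X$ and $S_2 = B_Y$; the meet formula of Remark \ref{lattice} together with $I \cap J = 0$ yields $X \cap Y = \emptyset$; and $I, J$ nonzero forces $X, Y$ nonempty, because if $X = \emptyset$ then $B_X = \emptyset$ and hence $I = 0$.

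\textbf{Step (iii): path condition by induction.} The identity $L_K(E) = I + J$ translates via the join formula of Remark \ref{lattice} into $\bigcup_{n \geq 0} X_n = E^0$, where $X_0 = X \cup Y$ and $X_{n+1}$ adjoins the regular vertices and the $B_X \cup B_Y$-vertices all of whose out-edges range in $X_n$. For $v \in E^0 \setminus (X \cup Y)$ set $n(v) = \min\{n : v \in X_n\} \geq 1$ and induct on $n(v)$. In the base case every edge from $v$ ranges in $X \cup Y$: if $v$ is regular, its finitely many outgoing edges form exactly the $XY$-compatible paths from $v$; if $v \in B_X$ (the $B_Y$ case being symmetric), the forbidden-edge characterization of $XY$-compatible paths rules out every edge $v \to X$, and, using $B_X \cap B_Y = \emptyset$ (a consequence of $X \cap Y = \emptyset$) together with $Y \subseteq E^0 \setminus X$, the compatible paths from $v$ are exactly the edges $v \to Y$, whose count is nonempty and finite from the very definition of $B_X$. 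The inductive step is identical, augmented by extensions: edges $v \to X_{n(v)-1} \setminus (X \cup Y)$ are completed by the inductively finitely many compatible paths at their ranges, and the first edge being non-forbidden is automatic since its range lies outside $X \cup Y$.

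\textbf{Main obstacle.} The delicate point is the breaking-vertex bookkeeping. Although $v \in B_X \setminus (X \cup Y)$ may emit infinitely many edges, those going into $X$ are entirely forbidden as initial edges of $XY$-compatible paths, so only the intrinsically finite set $s^{-1}(v) \cap r^{-1}(E^0 \setminus X)$ contributes continuations; this preserves finiteness, while $v \in B_X$ guarantees this set is nonempty. A subsidiary check, visible from the $X_n$ recursion, is that every vertex outside $X \cup Y$ must be regular or lie in $B_X \cup B_Y$, since otherwise it could never enter any $X_n$, contradicting $\bigcup_n X_n = E^0$.
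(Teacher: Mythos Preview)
Your proposal is correct and follows essentially the same route as the paper's own proof: reduce to graded ideals via local units and Proposition~\ref{gradedidempotents}, invoke Proposition~\ref{S_1=B_X} to get $S_1=B_X$ and $S_2=B_Y$, and then read off the path condition from the join formula $E^0=\bigcup_n X_n$ by induction on the least $n$ with $v\in X_n$. The only cosmetic difference is that the paper runs the existence and finiteness inductions separately (and is more explicit about the case analysis at each stage), whereas you fold them into a single induction; you are also slightly more explicit than the paper in deriving $X\cap Y=\emptyset$ and $X,Y\neq\emptyset$ from the meet formula and nontriviality of $I,J$.
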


\begin{proof}
If $L_K(E)$ is decomposable, then there exist two-sided ideals $I$ and $J$ such that $L_K(E)=I\oplus J$. We first find a set of local units in $I$. Recall that $L_K(E)$ has such a set of local units: the set consisting of finite sums of vertices of $E$. Thus each local unit $e\in L_K(E)$ can be decomposed uniquely as $e=e_I+e_J$ where $e_I\in I$ and $e_J\in J$. Also, since $e=e^2$ we get $e=e_I+e_J=e_I^2+e_J^2+e_Ie_J+e_Je_I=e_I^2+e_J^2$, and this implies $e_I^2-e_I=e_J-e_J^2\in I\cap J=0$, so $e_I$ and $e_J$ are idempotents in $I$ and $J$ respectively.

Now consider ${\mathcal X}=\{e_I\ : \ e\in L_K(E) \text{ is a local unit in }L_K(E)  \}$. This is a set of local units in $I$. To see this, take $x_1,\dots,x_n\in I$ and let $e\in L_K(E)$ be a local unit for the $x_k$ in $L_K(E)$ so that $ex_k=x_k=x_ke$ for all $k=1,\dots, n$. Then since $I$ and $J$ are orthogonal, we get $e_Ix_k=x_k=x_ke_I$ also. This shows that $I$ is generated by idempotents (i.e., the set of local units), so that an application of Proposition \ref{gradedidempotents} yields that $I$ is a graded ideal and similarly so is $J$.

We then know that there exist admissible pairs $(X,S_1)$ and $(Y,S_2)$ such that $I=I_{(X,S_1)}$ and $J=I_{(Y,S_2)}$ and thus Proposition \ref{S_1=B_X} applies to give $B_X=S_1$ and $B_Y=S_2$. Therefore, the lattice given in Remark \ref{lattice} gives that in this situation $E^0=\bigcup_{n=0}^\infty X_n$ where $X_0:=X\cup Y$ and $$X_{n+1}:=X_n\cup \{v\in E^0_{\text{reg}}: r(s^{-1}(v))\subseteq X_n\} \cup \{v\in B_X\cup B_Y: r(s^{-1}(v))\subseteq X_n\}.$$ Let $v\in E^0\setminus (X\cup Y)$. First we will show that there exists at least one $XY$-compatible path starting at $v$.

\smallskip

\underline{Step 1}: $v\in X_1\setminus X_0$. By the construction of $X_1$ we can have two possibilities. First, suppose $v\in E^0_{\text{reg}}$ and $r(s^{-1}(v))\subseteq X_0$. In this case there exists $e\in E^1$ such that $v=s(e)$ and $r(e)\in X_0=X\cup Y$ so the edge $e$ itself constitutes an $XY$-compatible path starting at $v$.

Second, suppose that $v\in B_X$ is such that $r(s^{-1}(v))\subseteq X_0$ (the case that $v\in B_Y$ is analogous). Since $v\in B_X$ then there exists $f\in E^1$ such that $r(f)\not\in X$. So on the one hand $f\not\in\{\alpha\in E^1 : s(\alpha)\in B_X, r(\alpha)\in X\}$, but we also have that $v=s(f)\not\in B_Y$ (since $B_X\cap B_Y=\emptyset$ by Lemma \ref{intersectionB}), and therefore $f\not\in\{\alpha\in E^1 : s(\alpha)\in B_Y, r(\alpha)\in Y\}$. But by hypothesis we also have that $r(f)\in X_0=X\cup Y$. All this then proves that $f$ is an $XY$-compatible path starting at $v$, as needed.

\smallskip

\underline{Step 2}: $v\in X_2\setminus X_1$. Again we have two cases for the vertex $v$. In the first case $v\in E^0_{\text{reg}}$ and $r(s^{-1}(v))\subseteq X_1$. Choose then $e\in E^1$ such that $v=s(e)$ and $r(e)\in X_1$. We can assume that $r(e)\not\in X_0$: otherwise we would have that $r(s^{-1}(v))\subseteq X_0$, and this would imply that $v\in X_1$. Apply now Step 1 to the vertex $w:=r(f)\in X_1\setminus X_0$ to find a $XY$-compatible path $\mu$ starting at $w$. Then $\nu:=e\mu$ is the desired $XY$-compatible path starting at $v$.

In the second case we have that $v\in B_X$ is such that $r(s^{-1}(v))\subseteq X_1$ (for $v\in B_Y$ we would reason in a similar way). Now $v\in B_X$ implies that there exists $f\in E^1$ such that $w:=r(f)\not\in X$. Actually we can assume that $w\not\in X_0$ because otherwise $r(s^{-1}(v))\subseteq X_0$, which would again lead us to the contradiction that $v\in X_1$. Thus, we again apply Step 1 to the vertex $w\in X_1\setminus X_0$ to find a $XY$-compatible path $\mu$ starting at $w$. We now claim that $\nu:=f\mu$ is a $XY$-compatible path starting at $v$. Indeed, as $w\not\in X$ we get that $f\not\in\{\alpha\in E^1 : s(\alpha)\in B_X, r(\alpha)\in X\}$. Furthermore, $v=s(f)\not\in B_Y$ (since $B_X\cap B_Y=\emptyset$ by Lemma \ref{intersectionB}), and therefore $f\not\in\{\alpha\in E^1 : s(\alpha)\in B_Y, r(\alpha)\in Y\}$, as we wanted.

This process can go on by induction for any $n\in {\mathbb N}$ and $v\in X_n\setminus X_{n-1}$, so that the existence of the $XY$-compatible path is proved. We now focus on the finiteness of the set of such compatible paths.

\smallskip

\underline{Step 1}: $v\in X_1\setminus X_0$. We have two options for $v$. Let us suppose first that: $v\in E^0_{\text{reg}}$ and $r(s^{-1}(v))\subseteq X_0$. Then, for every $e\in E^1$ such that $v=s(e)$ we have that its range arrives at $X_0=X\cup Y$, and by the definition of $XY$-compatible path, we see that no edge can follow $e$ to form such a path. Therefore only the edges of $s^{-1}(v)$ can constitute a $XY$-compatible path starting at $v$, but we have finitely many of them.

In the second case we assume that $v\in B_X$ is such that $r(s^{-1}(v))\subseteq X_0$ (if $v\in B_Y$ we would proceed analogously). Since $v$ is a breaking vertex of $X$ this means that $s^{-1}(v)$ is the disjoint union of the infinite set $I$ of edges whose range lie in $X$ and the finite set $F$ of edges whose range do not lie in $X$. That is, $I\subseteq \{\alpha\in E^1 : s(\alpha)\in B_X, r(\alpha)\in X\}$, and therefore the edges of $I$ cannot belong to any $XY$-compatible path: only the edges of $F$ might. Actually they do because $r(s^{-1}(v))\subseteq X_0=X\cup Y$ by assumption, and $v=s(f)\not\in B_Y$ for any $f\in F$, as we have seen in the previous paragraphs.

\smallskip

\underline{step 2}: $v\in X_2\setminus X_1$. Suppose first that $v\in E^0_{\text{reg}}$ and $r(s^{-1}(v))\subseteq X_1$. Split the set as $$s^{-1}(v)=\{g\in E^1: v=s(g), r(g)\in X_1\setminus X_0\}\cup \{g\in E^1: v=s(g), r(g)\in X_0=X\cup Y\},$$ then the elements of the second set are $XY$-compatible edges starting at $v$, while the elements of the first set are such that their range vertices are in the conditions of Step 1 above, so that for each vertex $r(g)$ where $g$ belongs to the first set, there exist at most finitely many $XY$-compatible paths starting at $s(g)$. Considering all possible combinations we obtain only finitely many possible $XY$-compatible paths starting at $v$.

In the second situation we are given $v\in B_X$ with $r(s^{-1}(v))\subseteq X_1$ ($v\in B_Y$ is similar). Here, the edges $\alpha\in E^1$ such that $v=s(\alpha)$ and  $r(\alpha)\in X$ cannot be the first edge of any $XY$-compatible path starting at $v$ so we must focus on the edges $\beta\in E^1$ such that $v=s(\beta)$ and  $r(\beta)\not\in X$, of which only finitely many exist.

Pick one such edge $g$ and we have two cases for $w=r(g)$. If $w\in X_0$ then the edge $g$ is a $XY$-compatible edge and path starting at $v$ which cannot be enlarged since $w\in X\cup Y$. On the other hand, if $w\in X_1\setminus X_0$, then we apply the previous Step 1 to get that there exist finitely many $XY$-compatible paths starting at that particular $w$. Thus, adding up and multiplying all possible combinations, in the end only finitely many $XY$-compatible paths starting at $v$ can be found, as we wanted to prove.

This procedure continues by induction for any $n\in {\mathbb N}$ and $v\in X_n\setminus X_{n-1}$. Thus the finiteness of the set of the $XY$-compatible path is hence shown.
\end{proof}

We now have both pieces to prove our main result.

\begin{proof}[Proof of Theorem \ref{theresult}]  The theorem is now a consequence of Proposition \ref{firstimplication} and Proposition \ref{secondimplication}.
\end{proof}


\section{Corollaries and Examples}

When the graphs have a finite number of vertices (but they can have infinitely many edges or even vertices which are infinite emitters), then a complete decomposition of $L_K(E)$ into unique indecomposable Leavitt path algebras can be achieved. This is done in the following result, which parallels \cite[Corollary 4.2]{H}.

\begin{corollary}\label{decomposition} Let $E$ be an arbitrary graph. If $L_K(E)$ is a unital algebra, then it can be decomposed as a finite direct sum of indecomposable Leavitt path algebras, which are uniquely determined up to isomorphism. That is, $L_K(E)\cong\bigoplus_{i=1}^n L_K(E_i)$, where each $L_K(E_i)$ is indecomposable and if  $L_K(E)\cong \bigoplus_{i=1}^m L_K(F_i)$, then $n=m$ and $L_K(E_i)\cong L_K(F_{\sigma(i)})$ for some permutation $\sigma$.
\end{corollary}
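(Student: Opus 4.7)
The plan is to establish existence by induction on $|E^0|$ (noting that $L_K(E)$ unital forces $E^0$ to be finite, as pointed out in Section 2), and to establish uniqueness by a standard manipulation with primitive central idempotents.

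For the existence step, I would first observe that if $L_K(E)$ is indecomposable then there is nothing to do (take $n=1$ and $E_1=E$). Otherwise I would invoke Theorem \ref{theresult} to produce disjoint, nontrivial hereditary saturated subsets $X,Y \in {\mathcal H}_E$ satisfying the compatibility condition, and then Proposition \ref{firstimplication} to obtain the concrete decomposition $L_K(E) = I_{(X,B_X)} \oplus I_{(Y,B_Y)}$. Since the sum is direct, $I_{(Y,B_Y)} \cong L_K(E)/I_{(X,B_X)}$ as $K$-algebras, and the arbitrary-graph version of \cite[Theorem 5.7]{T} recalled in Section 3 identifies this quotient with $L_K(E \setminus (X,B_X))$. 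The vertex set $(E \setminus (X,B_X))^0 = E^0 \setminus X$ is strictly smaller than $E^0$ (since $B_X \setminus B_X = \emptyset$ and $X$ is nontrivial), and symmetrically for the other summand. Each summand inherits a unit, namely the corresponding central component of $1_{L_K(E)}$, so the induction hypothesis applies to both and delivers the desired finite decomposition into indecomposable Leavitt path algebras.

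For uniqueness, suppose $L_K(E) \cong \bigoplus_{i=1}^n L_K(E_i) \cong \bigoplus_{j=1}^m L_K(F_j)$ are two decompositions into indecomposable summands. Each produces an orthogonal decomposition $1_{L_K(E)} = e_1 + \cdots + e_n = f_1 + \cdots + f_m$ of the identity into central idempotents (centrality follows from each summand being a two-sided ideal: for $r \in L_K(E)$ the elements $e_i r$ and $r e_i$ both lie in the $i$-th ideal, and expanding with the identity gives $e_i r = e_i r e_i = r e_i$). Indecomposability of $L_K(E_i)$ corresponds exactly to primitivity of $e_i$ inside $Z(L_K(E))$, because any proper orthogonal refinement of $e_i$ in the center would split $L_K(E_i) = e_i L_K(E)$ nontrivially, and conversely. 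Expanding $e_i = \sum_j e_i f_j$ and invoking primitivity, exactly one term $e_i f_{\sigma(i)}$ is nonzero and equals $e_i$; the symmetric argument shows $\sigma$ is a bijection with $e_i = f_{\sigma(i)}$, whence $L_K(E_i) = e_i L_K(E) = f_{\sigma(i)} L_K(E) \cong L_K(F_{\sigma(i)})$.

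The main obstacle in this proof is the identification of each summand in the existence step as a Leavitt path algebra of a concrete graph with strictly smaller vertex set. This identification rests on Proposition \ref{S_1=B_X} (which forces the admissible pair of any summand to have the form $(X,B_X)$) together with the arbitrary-graph quotient theorem cited in Section 3. Once this identification is in hand, both the inductive existence argument and the central-idempotent uniqueness argument reduce to essentially routine ring-theoretic manipulations.
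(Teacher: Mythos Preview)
Your proof is correct and follows essentially the same approach as the paper: the existence argument via the quotient-graph identification $I_{(X,B_X)}\cong L_K(E\setminus(Y,B_Y))$ and induction on $|E^0|$ is identical, and your uniqueness argument via primitive central idempotents is precisely the content of \cite[Lemma~(3.8)]{L}, which the paper simply invokes as a black box. One minor point: in your existence step you never actually need Proposition~\ref{S_1=B_X}, since you build the decomposition from Proposition~\ref{firstimplication} rather than analyzing an arbitrary one, so your closing remark overstates its role.
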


\begin{proof}
First, if $L_K(E)$ is indecomposable then there is nothing to prove. Otherwise, if $L_K(E)$ is decomposable, following the proof of Proposition \ref{secondimplication} then there exist nontrivial hereditary and saturated subsets $X$ and $Y$ such that $L_K(E)=I_{(X,B_X)}\oplus I_{(Y,B_Y)}$. Then, using the construction of the quotient graphs and isomorphisms detailed in \S 3, we have $$I_{(X,B_X)}\cong L_K(E)/I_{(Y,B_Y)}\cong L_K(E\setminus (Y,B_Y)),$$ where the quotient graphs now reduce to: \begin{align*}
(E\backslash (Y,B_Y))^{0}& =(E^{0}\backslash Y)\cup \{v^{\prime }:v\in
B_{Y}\backslash B_Y\}=E^{0}\backslash Y; \\
(E\backslash (Y,B_Y))^{1}& =\{e\in E^{1}:r(e)\notin Y\}\cup \{e^{\prime }:e\in
E^{1},r(e)\in B_{Y}\backslash B_Y\}=r^{-1}(E^1\setminus Y).
\end{align*} Similarly we can say that $I_{(Y,B_Y)}\cong L_K(E\setminus (X,B_X))$ so that $L_K(E)\cong L_K(E\setminus (X,B_X))\oplus L_K(E\setminus (Y,B_Y))$.

If $L_K(E)$ is unital then $E^0$ is a finite set. Since $X,Y\neq \emptyset$ then  $|(E\backslash (X,B_X))^{0}|=|E^{0}\backslash X|<|E^0|$, and also $|(E\backslash (Y,B_Y))^{0}|<|E^0|$. This means that we can apply the same procedure to $L_K(E\setminus (X,B_X))$ and $L_K(E\setminus (Y,B_Y))$, and after at most $|E^0|$ steps we arrive to an indecomposable decomposition. Note that we are using that the ideals in which we are decomposing are isomorphic to Leavitt path algebras, and therefore have local units, which in turn allows us to say that being an ideal is a transitive relation, i.e., if $I$ is an ideal of $J$ and $J$ is an ideal of $L_K(E)$, then $I$ is an ideal of $L_K(E)$ (see for instance \cite[Lemma 4.14]{T2}). This shows that the decomposition exists.

For the uniqueness, let $\varphi: \bigoplus_{i=1}^n L_K(E_i) \to \bigoplus_{i=1}^m L_K(F_i)$ be an algebra isomorphism. Consider $I_i=\varphi(L_K(E_i))$ which, since $\varphi$ is an isomorphism, are ideals in $\bigoplus_{i=1}^m L_K(F_i)$ such that $\bigoplus_{i=1}^n I_i= \bigoplus_{i=1}^m L_K(F_i)$. Now apply \cite[Lemma (3.8)]{L} to get that $n=m$ and $I_i= L_K(F_{\sigma(i)})$ for some permutation $\sigma$. Now since $I_i=\varphi(L_K(E_i))\cong L_K(E_i)$ we get the desired conclusion.
\end{proof}

We can now give the characterization of the indecomposable Leavitt path algebras in the simpler case that the graph $E$ is finite and row-finite:

\begin{corollary}\label{rowfinitecase} Let $E$ be a finite graph. The Leavitt path algebra $L_K(E)$ is decomposable if and only if there exist nontrivial hereditary and saturated subsets $X$ and $Y$ with $X\cap Y=\emptyset$ such that the following conditions hold:
\begin{enumerate}
\item For every $v\in E^0$ there exists a path joining $v$ with $X\cup Y$.
\item The graph $E^0\setminus (X\cup Y)$ is acyclic.
\end{enumerate}
\end{corollary}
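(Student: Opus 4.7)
The plan is to derive this corollary directly from Theorem~\ref{theresult}, using the finiteness of $E$ both to kill the infinite-emitter phenomena and to translate the ``at least one but finitely many $XY$-compatible paths'' condition into the cleaner pair of conditions (1) and (2). First I would note that, since $E$ is finite, there are no infinite emitters, so $B_X = B_Y = \emptyset$ for every pair of hereditary and saturated subsets $X, Y$. By Lemma~\ref{intersectionB}, this means that an $XY$-compatible path is simply a directed path $\mu = \alpha_1 \dots \alpha_n$ with $r(\alpha_n) \in X \cup Y$ and $s(\alpha_i) \notin X \cup Y$ for every $i$; equivalently, a directed path that begins outside $X \cup Y$, stays outside until the final step, and then lands inside $X \cup Y$.

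With this simplification in hand, the task reduces to showing that ``for every $v \in E^0 \setminus (X \cup Y)$ there exists at least one but finitely many $XY$-compatible paths starting at $v$'' is equivalent to the conjunction of (1) and (2). For the forward direction, (1) is immediate from the existence part (vertices already in $X \cup Y$ are joined to $X \cup Y$ by the trivial path of length zero), and for (2) I would argue by contradiction: if there were a cycle $c$ based at some $w \in E^0 \setminus (X \cup Y)$ lying entirely in the subgraph on $E^0 \setminus (X \cup Y)$, then $w$ would admit some $XY$-compatible escape path $\mu$ by hypothesis, and the iterates $\{c^k \mu\}_{k \geq 0}$ would constitute infinitely many distinct $XY$-compatible paths starting at $w$, contradicting finiteness. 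For the reverse direction, given $v \notin X \cup Y$, condition (1) supplies a path from $v$ to $X \cup Y$ which, truncated at the first vertex it hits in $X \cup Y$, produces an $XY$-compatible path starting at $v$; for finiteness, any $XY$-compatible path from $v$ decomposes as a directed path inside the acyclic finite subgraph on $E^0 \setminus (X \cup Y)$ followed by a single edge leading to $X \cup Y$, so acyclicity together with finiteness of $E$ bounds the number of such paths.

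The most delicate point will be the cycle-produces-infinitely-many direction: I need to verify that each $c^k \mu$ is genuinely $XY$-compatible in the sense given above, which works because all vertices traversed by the cycle $c$ lie in $E^0 \setminus (X \cup Y)$ (since $c$ sits entirely in that subgraph) and all non-final vertices of $\mu$ also lie outside $X \cup Y$ (as $\mu$ is itself $XY$-compatible), so the concatenation is a directed path whose only vertex inside $X \cup Y$ is the very last one. The remaining verifications are routine once the simplified description of $XY$-compatible paths in the finite setting is in place, and the conclusion then follows by an immediate application of Theorem~\ref{theresult}.
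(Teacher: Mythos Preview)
Your proposal is correct and follows essentially the same route as the paper: both arguments use the absence of infinite emitters to make $B_X=B_Y=\emptyset$ (so condition~(i) of Lemma~\ref{intersectionB} becomes vacuous), and then match the ``at least one but finitely many $XY$-compatible paths'' condition of Theorem~\ref{theresult} with conditions (1) and (2) via the cycle argument. Your write-up is in fact more explicit than the paper's, which leaves one direction of the finiteness--acyclicity equivalence to ``similar ideas''.
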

\begin{proof}
Since $E$ does not contain infinite emitters by hypothesis, an application of Theorem \ref{theresult} shows that condition (i) in the Theorem is then a tautology. On the other hand, the existence of a path $\alpha=\alpha_1\dots \alpha_k$ in Theorem \ref{theresult} corresponds to (1) in the statement of the Corollary. The fact that there are finitely many $XY$-compatible paths is related to item (2) of the statement as follows:

Suppose that there exist infinitely many $XY$-compatible paths $\alpha=\alpha_1\dots \alpha_k$ starting at $v$. Since $E$ is finite, there are finitely many vertices to choose from for the $s(\alpha_i)$. But in addition to this, since $E$ is row-finite, this necessarily implies that $E^1$ is finite as well. Thus, the only way we can get infinitely many $XY$-compatible paths is through the existence of a cycle $c$, having at least one vertex outside $X\cup Y$. Now, since $X\cup Y$ is hereditary, actually all the vertices $c^0$ must lie outside $X\cup Y$, which contradicts (2). The converse can be proved using similar ideas.
\end{proof}


The result in Corollary \ref{decomposition} can be generalized when the Leavitt path algebra satisfies a two-sided chain condition, concretely when it is either two-sided noetherian (i.e., it satisfies the a.c.c. in the set of two-sided ideals), or it is two-sided artinian (i.e., it satisfies d.c.c. on the set of two-sided ideals). These two conditions were characterized in \cite[Theorem 3.6]{ABCR} and \cite[Theorem 3.9]{ABCR} respectively for Leavitt path algebras of arbitrary graphs.

\begin{corollary}\label{twosidednoetherian} Let $E$ be an arbitrary graph. If $L_K(E)$ is either two-sided noetherian or two-sided artinian, then it can be decomposed as a finite direct sum of indecomposable Leavitt path algebras, which are uniquely determined up to isomorphism. That is, $L_K(E)\cong\bigoplus_{i=1}^n L_K(E_i)$, where each $L_K(E_i)$ is indecomposable and if  $L_K(E)\cong \bigoplus_{i=1}^m L_K(F_i)$, then $n=m$ and $L_K(E_i)\cong L_K(F_{\sigma(i)})$ for some permutation $\sigma$.
\end{corollary}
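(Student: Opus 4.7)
The plan is to follow the proof of Corollary \ref{decomposition} almost verbatim, replacing the termination argument (which used $|E^0|<\infty$) by one that invokes the chain condition on two-sided ideals. The construction of the splitting and the uniqueness step remain unchanged.

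For existence I would run the recursive splitting exactly as in Corollary \ref{decomposition}: if $L_K(E)$ is indecomposable we are done, otherwise Proposition \ref{secondimplication} combined with Proposition \ref{S_1=B_X} yields a splitting $L_K(E)=I_{(X,B_X)}\oplus I_{(Y,B_Y)}$ with each summand isomorphic to the Leavitt path algebra of a quotient graph, and I would then recurse on each summand. Let $T$ denote the binary tree of these iterated decompositions (leaves being indecomposable pieces and internal nodes decomposable ones); it suffices to prove $T$ is finite.

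Termination is the only new ingredient. Suppose for contradiction that $T$ is infinite. Since at every internal node at least one child must have an infinite subtree (otherwise the subtree at that node would be finite), one can greedily descend to obtain an infinite descending branch
\[L_K(E)=J_0\supsetneq J_1\supsetneq J_2\supsetneq\cdots,\]
where at each step I write $J_{n-1}=J_n\oplus K_n$ with both $J_n, K_n$ nonzero. Because each $J_n$ is isomorphic to a Leavitt path algebra and therefore has local units, the transitivity principle \cite[Lemma 4.14]{T2} --- already invoked in the proof of Corollary \ref{decomposition} --- ensures that every $J_n$ and every $K_n$ is a two-sided ideal of $L_K(E)$. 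In the two-sided artinian case the chain $(J_n)_{n\geq 0}$ already violates the DCC on two-sided ideals of $L_K(E)$. In the two-sided noetherian case, iterating the decomposition yields $L_K(E)=J_n\oplus K_n\oplus K_{n-1}\oplus\cdots\oplus K_1$ for every $n$, so the $K_i$ are pairwise orthogonal nonzero ideals of $L_K(E)$ and
\[K_1\subsetneq K_1\oplus K_2\subsetneq K_1\oplus K_2\oplus K_3\subsetneq\cdots\]
is a strictly ascending chain of two-sided ideals, contradicting the ACC. Either way $T$ is finite, and its leaves provide the desired decomposition $L_K(E)\cong\bigoplus_{i=1}^n L_K(E_i)$ into indecomposable Leavitt path algebras.

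For uniqueness I would simply reproduce the uniqueness argument of Corollary \ref{decomposition}: given two indecomposable decompositions, transfer one along an algebra isomorphism into the other and apply \cite[Lemma (3.8)]{L} to recover $n=m$ together with a permutation $\sigma$ with $L_K(E_i)\cong L_K(F_{\sigma(i)})$. The only real obstacle in the entire proof is the termination step, and within that the one nontrivial point is passing from "ideal of a summand" to "ideal of $L_K(E)$", which is handled by the local-units transitivity \cite[Lemma 4.14]{T2}.
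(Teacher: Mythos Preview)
Your existence argument is essentially the paper's: the binary-tree/K\"onig framing is a cosmetic repackaging of the paper's recursive splitting, and the two chains you build (the descending $(J_n)$ for the artinian case and the ascending $(K_1\oplus\cdots\oplus K_n)$ for the noetherian case) are exactly those the paper writes down as $L_K(E)\supset I_2\supset I_4\supset\cdots$ and $I_1\subset I_1\oplus I_3\subset\cdots$. The appeal to \cite[Lemma 4.14]{T2} for transitivity of idealhood is also the same.

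There is, however, a small but genuine gap in your uniqueness step. You say you will ``simply reproduce the uniqueness argument of Corollary \ref{decomposition}'', but that argument invoked \cite[Lemma (3.8)]{L} for a \emph{unital} ring (in Corollary \ref{decomposition} the algebra $L_K(E)$ is unital by hypothesis). Here $L_K(E)$ need not be unital, so one must check that the ingredient underlying \cite[Lemma (3.8)]{L} survives: namely, that if $R=B_1\oplus\cdots\oplus B_n$ is a direct sum of ideals and $R$ has local units, then every ideal $I$ of $R$ decomposes as $I=(I\cap B_1)\oplus\cdots\oplus(I\cap B_n)$. The paper supplies exactly this verification (writing a local unit $e=e_1+\cdots+e_n$ and using $a=ae=ae_1+\cdots+ae_n$). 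Your claim that ``the only real obstacle is the termination step'' is therefore not quite right; once you add this short local-units check, your proof matches the paper's.
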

\begin{proof}
First we prove that the decomposition exists. If $L_K(E)$ is indecomposable there is nothing to prove, otherwise following the proof of Corollary \ref{decomposition} there exist (nontrivial) hereditary and saturated subsets $X$ and $Y$ of $E$ such that $L_K(E)=I_1\oplus I_2$ where $I_1=L_K(E\setminus (X,B_X))$ and $I_2=L_K(E\setminus (Y,B_Y))$. If both $I_1$ and $I_2$ are indecomposable, we are finished. Otherwise, suppose for instance that $I_2=L_K(E\setminus (Y,B_Y))$ is decomposable and again find nontrivial ideals $I_3$ and $I_4$, isomorphic to appropriate Leavitt algebras, such that $I_2=I_3\oplus I_4$, and again an application of \cite[Lemma 4.14]{T2} gives that $I_3$ and $I_4$ are ideals of $L_K(E)$ as well, so that we have a decomposition $L_K(E)\cong I_1\oplus I_3 \oplus I_4$. If this process were to continue, we would find a nontrivial decomposition for $I_4$ say, as $I_4=I_5\oplus I_6$, etc.

Thus, in the case that $L_K(E)$ was two-sided noetherian we would obtain an infinite ascending chain $I_1\subset I_1\oplus I_3 \subset I_1\oplus I_3\oplus I_5\subset ...$, which would give a contradiction, whereas in the situation that $L_K(E)$ was two-sided artinian, we would use the infinite descending chain $L_K(E)\supset I_2 \supset I_4 \supset ...$ to reach to the desired contradiction. In either case the indecomposable decomposition must therefore exist.

To see the uniqueness we use the argument given in Corollary \ref{decomposition}, where in order to make use of \cite[Lemma (3.8)]{L} we must make sure that if $R=B_1\oplus \dots \oplus B_n$ where $B_i$ is an ideal of $R$ (and $R$ might not be unital but has local units as does $L_K(E)$), then each ideal $I$ of $R$ has the form $I=I_1\oplus \dots \oplus I_n$, where $I_i$ is an ideal of the ring $B_i$.

Indeed, define $I_i=I\cap B_i$ and we claim that $I\subseteq I_1\oplus \dots \oplus I_n$. Let $a\in I$ and find $e=e^2\in R$ a local unit for $a$ such that $a=ae$. Write $e=e_1+\dots+e_n$ with $e_i=e_i^2\in B_i$. Then $a=ae=ae_1+\dots+ae_n\in I_1\oplus \dots \oplus I_n$. The reverse containment is obvious. This finishes the proof.

\end{proof}

We conclude the paper with several examples which illustrate our results.

\begin{examples} {\rm  (i) There exist indecomposable Leavitt path algebras which are not simple: Let $E_\infty$ denote the infinite edges graph given by
\[\begin{array}{cc} E_\infty \ = &\xymatrix{{\bullet}^w  \ar[r]^{(\infty)} & {\bullet}^x}\end{array}\] where the label $(\infty)$ denotes an infinite set of edges. By \cite[Examples 3.2]{AA3} this algebra is not simple. However, it is indecomposable by Theorem \ref{theresult} because the only nontrivial hereditary and saturated set is $\{x\}$.

\medskip

(ii) The row-finite and arbitrary graph cases are different, in the following sense: consider the graph $E$ given by $$\xymatrix{{\bullet}^w  \ar[r]^{(\infty)} \ar[dr]_{(\infty)}& {\bullet}^x \\   &  {\bullet}^y}$$ Then the only nontrivial hereditary and saturated subsets are $X=\{x\}$ and $Y=\{y\}$, so if $L_K(E)$ was decomposable, then $(X,Y)$ would be the only possible pair to be considered in Theorem \ref{theresult}. Now since $w\not\in B_X\cup B_Y$, then all the edges in the graph are $XY$-compatible edges so that there exist infinitely many $XY$-compatible paths starting at $w=E^0\setminus (X\cup Y)$, a contradiction. Thus $L_K(E)$ is indecomposable.

However, the two conditions of Corollary \ref{rowfinitecase} are clearly satisfied, with $E^0$ finite, which shows that the hypothesis of the row-finiteness in Corollary \ref{rowfinitecase} cannot be removed (otherwise it would yield that $L_K(E)$ is decomposable, which we know is false).

\medskip

(iii) The condition of the finiteness of the graph in Corollary \ref{rowfinitecase} cannot be removed either. To show this, consider the infinite graph $F$ given by $$\xymatrix{  & & & & {\bullet}^x \\ ^{\dots} \ar@{.}[r] \ar@{.}[urrrr]\ar@{.}[drrrr]& {\bullet}^{v_4}  \ar[urrr] \ar[drrr]  &  {\bullet}^{v_3} \ar[l] \ar[urr] \ar[drr] & {\bullet}^{v_2} \ar[l] \ar[ur] \ar[dr] & {\bullet}^{v_1} \ar[l] \ar[u] \ar[d]  \\   & & & & {\bullet}^y}$$ Then the only nontrivial hereditary and saturated sets are $X=\{x\}$ and $Y=\{y\}$ (if any $v_i$ was contained in some other hereditary and saturated subset $H$, by hereditariness we have $v_j\in H$ for all $j> i$, and also $x,y\in H$. At this point apply saturation to get $v_{i-1}\in H$, and go backwards in the graph to get $v_{i-2}\in H$, etc.)

The graph $F$ does not have infinite emitters, and since $x$ and $y$ are sinks, then clearly every edge in the graph is a $XY$-compatible edge, so that there exist infinitely many $XY$-compatible paths starting at every $v_k$. Then Theorem \ref{theresult} gives that $L_K(F)$ is indecomposable.

Now suppose that Corollary \ref{rowfinitecase} could be applied (the only hypothesis which is not satisfied is the finiteness of $F^0$). In this scenario, since conditions (1) and (2) in Corollary \ref{rowfinitecase} are clearly satisfied for $(X,Y)$, it would give that $L_K(F)$ is decomposable, a contradiction.

\medskip

(iv) The following example illustrates the subtleties which appear with infinite emitters. Consider the following three graphs $E_1$, $E_2$ and $E_3$ given by
\[\begin{array}{cccccc} E_1 \ = & \xymatrix{{\bullet}^w  \ar[r] \ar[dr] & {\bullet}^x \\   &  {\bullet}^y} &
\phantom{space} E_2 \ = & \xymatrix{{\bullet}^w  \ar[r]^{(\infty)} \ar[dr]_{(\infty)}& {\bullet}^x \\   &  {\bullet}^y}
&
\phantom{space} E_3 \ = & \xymatrix{{\bullet}^w  \ar[r]^{(\infty)} \ar[dr]_{(n)}& {\bullet}^x \\   &  {\bullet}^y}\end{array}\] where $(n)$ denotes that there are $n\in {\mathbb N}$ parallel edges from $w$ to $y$. From \cite[Proposition 3.5]{AAS} we know that $L_K(E_1)\cong {\mathbb M}_2(K)\oplus {\mathbb M}_2(K)$ and therefore it is decomposable.

However, if we have infinitely many edges, as we have seen in (ii), we get that the graph $E_2$ gives an indecomposable Leavitt path algebra $L_K(E_2)$. Note that this happens despite having a ``bifurcation" at $w$, which is what essentially gives the direct sum decomposition in \cite[Proposition 3.5]{AAS}.

Moreover, by further changing the graph to $E_3$ we again get a decomposable Leavitt path algebra $L_K(E_3)$: the reason is that $X:=\{x\}$ and $Y:=\{y\}$ meet the hypotheses of Theorem \ref{theresult}. Indeed, $w\in B_X\setminus B_Y$ and also none of the edges in $r^{-1}(x)$ can be $XY$-compatible edges starting at $w$; but at the same time, all the edges in $r^{-1}(y)$ are $XY$-compatible, so that there exist finitely many $XY$-compatible paths starting at $w$.

\medskip

(v) Furthermore, we can get the indecomposable decomposition of the graph $E_3$ above. Following (iv), the pair $X=\{x\}$ and $Y=\{y\}$ satisfy the hypotheses of Theorem \ref{theresult}, so that we can follow the proof of Corollary \ref{decomposition} and we get that $L_K(E_3)\cong L_K(E_3\setminus (X,B_X))\oplus L_K(E_3\setminus (Y,B_Y))$. Now the quotient graph constructions give that \[\begin{array}{cccc} E_3\setminus (X,B_X) \ = & \xymatrix{{\bullet}^w  \ar[r]^{(n)} & {\bullet}^y} &
\phantom{space} E_3\setminus (Y,B_Y) \ = & \xymatrix{{\bullet}^w  \ar[r]^{(\infty)} & {\bullet}^x}\end{array}\] so that an application of \cite[Proposition 3.5]{AAS} and the previous item (i), respectively, give that $L_K(E_3)\cong {\mathbb M}_n(K)\oplus L_K(E_\infty)$ is the desired indecomposable decomposition of $L_K(E_3)$.

Note however that for the graph $E_2$ in (iv), we have that $L_K(E_2)\not\cong L_K(E_\infty)
\oplus L_K(E_\infty)$, contrary to intuition.
}

\end{examples}


\section*{acknowledgements}

The authors are thankful to the referee for his/her careful reading of the paper and for some helpful
comments and suggestions that improved the presentation of the paper. Also we thank professor P. Ara, E. Pardo and M. Tomforde for valuable correspondence and input. The first author was partially supported by the Spanish MEC and Fondos FEDER through project MTM2010-15223, and by the Junta de Andaluc\'{\i}a and Fondos FEDER, jointly, through projects FQM-336 and FQM-2467. Part of this work was carried out during a visit of the first author to the University of Isfahan, Iran. The first author thanks this host institution for its warm hospitality and support. The research of the second author was in part supported by a grant from IPM(No. 91160042). The second author would like to thank the Banach Algebra Center of Excellence for Mathematics, University of Isfahan.

\end{document}